\numberwithin{equation}{section}
\newtheorem{thm}{Theorem}[section]
\newtheorem{lem}[thm]{Lemma}
\newtheorem{cond}[thm]{Condition}
\def\cal#1{\fam2#1}
\def\C{{\mathbb C}}
\def\bb{\begin}
\def\be{\begin{equation}}
\def\ee{\end{equation}}
\def\bea{\begin{eqnarray}}
\def\eea{\end{eqnarray}}
\def\beaa{\begin{eqnarray*}}
\def\eeaa{\end{eqnarray*}}
\def\ifl{\iffalse}
\def\bb{\begin}
           \def\ea{\end{array}}
          \def\ec{\end{center}}
     \def\ed{\end{description}}
\def\be{\bb{equation}}        \def\ee{\end{equation}}
\def\bea{\bb{eqnarray}}       \def\eea{\end{eqnarray}}
\def\beaa{\bb{eqnarray*}}     \def\eeaa{\end{eqnarray*}}
 \def\et{\end{thebibliography}}
      \def\e{\varepsilon}
       \def\F{{\cal F}}    
   \def\U{{\cal U}}    \def\X{{\cal X}}    
\def\C{\mathcal{C}}
\def\bar2{\doublebar}
       \def\nt{\noindent}
\def\qed {\hfill $\Box$\vskip5pt}
\def\Ind {{\rm Ind}}
\def\Int{{\rm Int}}
\def\Sing{{\rm Sing}}
\def\loc{{\rm loc}}
\def\dist{{\rm dist}}
\def\Dom{{\rm Dom}}
\def\Per{{\rm Per}}
\def\Rep{{\rm Rep}}
\def\Orb{{\rm Orb}}
\def\OS{{\rm OrientSh}}
\def\StSh{{\rm StSh}}
\def\bR{{\Bbb R}}
\begin{document}

\title{oriented shadowing property and $\Omega$-stability for vector fields}

\author{Shaobo Gan,  Ming Li, and Sergey B. Tikhomirov}

\address{LMAM and School of Mathematical Sciences, Peking University, Beijing 100871, People's republic of China}
\email{gansb@pku.edu.cn (S. Gan)}

\address{School of Mathematical Sciences and LPMC, Nankai University, Tianjin 300071, People's Republic of China}
\email{limingmath@nankai.edu.cn (M. Li)}

\address{Max Planck Institute for Mathematics in the Sciences, Inselstrasse 22, Leipzig, 04103,
Germany}
\address{Chebyshev Laboratory, Saint-Petersburg State University,
14th line of Vasilievsky island, 29B, Saint-Petersburg, 199178,
Russia} \email{sergey.tikhomirov@gmail.com (S. Tikhomirov)}

\subjclass[2010]{Primary 37C50.}

\keywords{Vector fields; oriented shadowing; $\Omega$-stability.}

\begin{abstract}
We call that a vector field has the oriented shadowing property
if for any $\varepsilon>0$ there is $d>0$ such that each $d$-pseudo
orbit is $\varepsilon$-oriented shadowed by some real orbit. In this paper, we show that the $C^1$-interior of the set of vector
fields with the oriented shadowing property is contained in the set of vector fields with the $\Omega$-stability.
\end{abstract}

\maketitle

\section{Introduction}

The theory of shadowing of approximate trajectories
(pseudo orbits or pseudotrajectories) of dynamical systems is now a well developed part of the global theory of dynamical systems (see, for example,
the monographs \cite{pal, P}). This theory is
closely related to the classical theory of structural stability (the
basic definitions of structural stability and $\Omega$-stability for diffeomorphisms and vector fields can be found, for example, in the monograph \cite{Katok, PilSSBook}).

It is well known that a diffeomorphism  has the shadowing property in
a neighborhood of a hyperbolic set \cite{A, B} and a
structurally stable diffeomorphism (satisfying Axiom A and the strong transversality condition) has the shadowing property on the
whole manifold \cite{R, M, Saw}.
At the same time, the problem of complete description of systems having the
shadowing property seems unsolvable. We have no hope to characterize
systems with the shadowing property in terms of the theory of
structural stability (such as hyperbolicity and transversality)
since the shadowing property is preserved under homeomorphisms of
the phase space (at least in the compact case), while the
above-mentioned properties are not.

The situation changes completely when we pass from the set of smooth
dynamical systems having the shadowing property to its $C^1$-interior. It was shown by Sakai \cite{S}
that the $C^1$-interior of the set of diffeomorphisms with the
shadowing property coincides with the set of structurally stable
diffeomorphisms (see \cite{prs} for some generalizations).

In the present paper we study $C^1$-interior of the set of vector fields with the shadowing property (we call them vector fields with the robust shadowing property). Let us note that the main difference between the shadowing problem for flows and the similar problem for discrete dynamical
systems generated by diffeomorphisms is related to the necessity
of reparametrization of shadowing trajectories in the former
case (see \cite{K, Tho} for the detailed discussion). Another difference arises because of the possibility of accumulation of periodic orbits to a singularity in a robust way.

As in the case of diffeomorphisms, it is well known that a vector field has the shadowing property in a neighborhood of a hyperbolic set \cite{pal, P} and a structurally stable vector field (satisfying Axiom A' and the strong transversality condition) has the shadowing
property on the whole manifold \cite{P, Pilyugin4}.
And as with diffeomorphisms, structural stability is not equivalent to
shadowing.

The structure of the $C^1$-interior of the set of vector fields with the shadowing property is more complicated.

A lot of results show deep connection between $C^1$-interior of the set of vector fields with the shadowing property and structural stability. For vector fields without singularities  such $C^1$-interior consists of only structurally stable vector fields \cite{LS}. On manifolds of dimension 3 and less such $C^1$-interior coincides with the set of structurally stable vector fields \cite{PilTikhDAN, TikhVest}. Vector fields with the robust shadowing property and without
nontransverse intersection stable and unstable manifolds of two
hyperbolic singularities are structurally stable~\cite{PilTikhDAN, PT}.

It is worth to mention studies of the shadowing property for the Lorenz attractor. Komuro \cite{Ko1985} proved that (except very special case) geometric Lorenz flow do not have shadowing property (see also \cite{ARR2013}), however geometric Lorenz flow has parameter-shifted shadowing property for a wide range of parameters \cite{KS}.

At the same time there exists a non-structurally stable vector field on a manifold of dimension 4 which lies in the $C^1$-interior of the set of vector fields with the shadowing property \cite{PT}. The example is a semi-local construction, which consists of two hyperbolic singularities points with complex conjugated eigenvalues and a trajectory of non-transverse intersection of their stable and unstable manifolds. In this example the vector field is not structurally stable due to failure of the strong transversality condition. However it is not clear if one can construct a vector field with the robust shadowing property, which does not satisfy  Axiom A'.

In present work we prove that vector fields with the robust shadowing property are $\Omega$-stable and hence satisfy Axiom A'.

The key role in the proof is played by the star condition (which means that one can not get non-hyperbolic singularities or closed trajectories via a $C^1$ small perturbation, see section \ref{secStar} for the details).
For diffeomorphisms, it is proved in \cite{Ao,Ha,Li,Ma} that the star condition implies the $\Omega$-stability. However, it is not true for vector fields \cite{Di,Gu,LW}. So we have to use additional arguments (Lemmas \ref{lem.weakspace}, \ref{lem.main}) in order to prove $\Omega$-stability.

It is worth to mention another approaches to compare shadowing and structural stability. Analyzing the proofs of the first shadowing results by Anosov
\cite{A} and Bowen \cite{B}, it is easy to see that, in a
neighborhood of a hyperbolic set, the shadowing property is
Lipschitz (and the same holds in the case of structurally stable
diffeomorphisms and vector fields \cite{P}). Moreover structural stability is equivalent to Lipschitz shadowing for diffeomorphisms \cite{pt}, and vector fields \cite{PPT}. Another approach is to consider shadowing and structural stability in $C^1$-generic context. In \cite{AD} it was proved that for so-called ``tame'' diffeomorphisms $C^1$-generically shadowing and structural stability are equivalent. Recently \cite{R2014} it was shown that for vector fields $C^1$-generically shadowing and hyperbolicity are equivalent for isolated sets.

The paper is organised as follows. In section \ref{secDef} we give necessary definitions and state the main result. In section \ref{secStar} we define notion of star flows and formulate necessary statements about them. In section \ref{secProof} we formulate key Lemma \ref{lem.main} and reduce the main theorem to it. In section \ref{sec.linear} we prove some properties of a vector field in a neighborhood of a singularity with a homoclinic connection. In section \ref{secLProof} we complete the proof of Lemma \ref{lem.main}.

\section{Definitions and main results}\label{secDef}

Let $M$ be a compact smooth Riemannian manifold without boundary.
Denote by $\X^1(M)$ the set of $C^1$ vector fields on $M$ endowed with the $C^1$-topology. For a set $A \subset \X^1(M)$ denote $\Int^1(A)$ the interior of $A$ in the $C^1$-topology.

For any $X\in\X^1(M)$, $X$ generates a $C^1$ flow
$$
\phi_t=\phi_{X,t}:M\to M, \ t\in \bR.
$$
Let
$$
\Sing(X)=\{x\in M: X(x)=0\}
$$
be the set of singularities of $X$ and
$$
\Per(X)=\{x\in M\setminus\Sing(X): \exists\ T {\rm \ such\ that\ } \phi_{_T}(x)=x\}
$$
be the set of regular periodic points of $X$. (Here we say a point is {\it regular} if it is not a singularity.)

Denote by
$\Orb(x)=\Orb_X(x)=\phi_{(-\infty,+\infty)}(x)$ the orbit of $x$. And denote by $\Orb^+(x)=\phi_{[0,+\infty)}(x)$, $\Orb^-(x)=\phi_{(-\infty,0]}(x)$ the positive, negative orbit of $x$ respectively.

For any $d>0$, a map $g:\bR\to M$ (not necessarily continuous) is
called a {\it $d$-pseudo orbit}, if
$$
\dist(g(t+\tau),\phi_\tau(g(t)))<d
$$
for all $t\in \bR$ and $\tau\in[0,1]$.

A reparametrization is an increasing homeomorphism $h$ of the line
$\bR$ such that $h(0)=0$; we denote by $\Rep$ the set of all reparametrizations.
For $a>0$, we denote
$$
\Rep(a)=\left\{h\in\Rep:\;\left|\frac{h(t)-h(s)}{t-s}-1\right|<a,\quad
t,s\in\bR,\;t\neq s\right\}.
$$

We say that a vector field $X\in \X^1(M)$ has the {\it (standard) shadowing property} if for any $\varepsilon>0$ there exists a constant $d>0$ such that, for any  $d$-pseudo orbit $g$ of $X$, $g$ is $\varepsilon$-shadowed by a real orbit of $X$, that is,
there exists a point $x\in M$  and a reparametrization $h \in \Rep(\e)$ satisfying
\begin{equation}\label{eqSh}
\dist(g(t),\phi_{h(t)}(x))\le \varepsilon, \quad \mbox{for all $t$}.
\end{equation}
Denote by  $\StSh(M)$ the set of vector fields satisfying the shadowing property.

We say that a vector field $X\in \X^1(M)$ has the {\it oriented shadowing property} if for any $\varepsilon>0$ there exists a constant $d>0$ such that, for any  $d$-pseudo orbit $g$ of $X$, $g$ is $\varepsilon$-oriented shadowed by a real orbit of $X$, that is,
there exists a point $x\in M$  and a reparametrization $h \in \Rep$ satisfying (\ref{eqSh}).
Denote by  $\OS(M)$ the set of vector fields satisfying the oriented shadowing property. Moreover, we say
that a vector field $X$ has the {\it $C^1$-robustly oriented shadowing property} if $X\in \Int^1(\OS(M))$.

Let us note that the standard shadowing property is equivalent to
the strong pseudo orbit tracing property (POTP) in the sense of
Komuro \cite{K}; the oriented shadowing property was called the
normal POTP by Komuro \cite{K} and the POTP for flows by Thomas
\cite{Tho}.

Reparametrizations are essential in the definition of shadowing property, since without reparametrizations vector fields do not have shadowing even in a neighborhood of hyperbolic closed trajectory (see remark after Theorem 1.5.1 in \cite{P}).

Standard and oriented shadowing properties differs only in restrictions on reparametrizations. In case of standard shadowing reparametrization is asked to be close to identity and in case of the oriented shadowing it can be an arbitrarily increasing homeomorphism. Clearly
\begin{equation}\label{Text3.1}
\StSh(M) \subset \OS(M).
\end{equation}
Recently it was shown that the difference in the choice of reparametrization is essential,  so inclusion (\ref{Text3.1}) is strict \cite{TikhOrientNotSt}.
However for vector fields without singularities standard and oriented shadowing properties are equivalent \cite{K}

In the present paper we consider oriented shadowing property.
Below is the main result of this paper.

\begin{thm}\label{thm.MainThm}
Every vector field satisfying the $C^1$-robustly oriented shadowing property is
$\Omega$-stable.
\end{thm}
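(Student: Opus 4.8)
The plan is to show first that the $C^1$-robustly oriented shadowing property forces the \emph{star condition} of section \ref{secStar}: every singularity and every closed orbit of $X$, and of every $C^1$-nearby vector field, is hyperbolic. To see this I would argue by contradiction. Since $X\in\Int^1(\OS(M))$, some $C^1$-neighborhood of $X$ lies in $\OS(M)$; if a vector field in it had a non-hyperbolic singularity or closed orbit, then after one further $C^1$-small perturbation --- which again stays in $\OS(M)$ --- one can manufacture, for arbitrarily small $d>0$, $d$-pseudo orbits that wind around the degenerate element and cannot be $\varepsilon$-oriented shadowed by any real orbit, a contradiction. Once $X$ is a star flow, I would invoke the structure theory of star flows gathered in section \ref{secStar}: all critical elements are hyperbolic, and the nonsingular part of the chain recurrent set is hyperbolic with the no-cycle property (away from singularities the distinction between the two shadowing notions does not even arise). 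This reduces $\Omega$-stability of $X$ to ruling out the single phenomenon responsible for the known failure, for flows, of the implication ``star vector field $\Rightarrow$ $\Omega$-stable'': a singularity $\sigma$ of $X$ lying in a \emph{nontrivial} chain recurrence class, i.e.\ robustly accumulated by regular recurrent orbits.

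\textbf{Reduction to a singularity with a homoclinic connection (section \ref{secProof}).} Suppose, for contradiction, that $X\in\Int^1(\OS(M))$ is not $\Omega$-stable. By the previous step $X$ is a star flow and the obstruction is a singularity $\sigma$ in a nontrivial chain class. Using a connecting-lemma argument for flows, I would produce a $C^1$-small perturbation $Y$ of $X$ --- still in $\Int^1(\OS(M))$, since that set is open --- possessing a hyperbolic singularity $\sigma_Y$ that carries a \emph{homoclinic connection}, namely a regular orbit contained in $W^s(\sigma_Y)\cap W^u(\sigma_Y)$, with eigenvalue configuration constrained by the star condition. This is precisely the hypothesis of the key Lemma \ref{lem.main}; its conclusion --- that such a vector field is not $C^1$-robustly oriented shadowing --- contradicts $Y\in\Int^1(\OS(M))$. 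Hence no such $\sigma$ exists and $X$ is $\Omega$-stable.

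\textbf{Proof of Lemma \ref{lem.main}: the local picture near the singularity.} The work concentrates in a small neighborhood $U$ of $\sigma_Y$ containing the homoclinic loop $\Gamma$. Using the star condition to control the spectrum of the linear part at $\sigma_Y$, and Lemma \ref{lem.weakspace} to locate $\Gamma$ relative to the weak (least contracting / least expanding) eigenspaces --- this being the content of the linear analysis of section \ref{sec.linear} --- I would construct, for every small $d>0$, a $d$-pseudo orbit that travels along $\Gamma$ but on each passage through $U$ is pushed to a prescribed side of a weak eigenspace and made to dwell near $\sigma_Y$ for an arbitrarily long time. A genuine orbit $\varepsilon$-shadowing such a pseudo orbit would then be forced to track $W^u(\sigma_Y)$ and $W^s(\sigma_Y)$ so tightly that it becomes asymptotic to $\sigma_Y$ in one time direction, which is incompatible with its having to return near $\sigma_Y$ later along $\Gamma$. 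The obstruction produced is geometric --- a displacement transverse to $\Gamma$ --- so the freedom to choose an arbitrary reparametrization $h\in\Rep$, exactly the feature by which oriented shadowing is weaker than standard shadowing, cannot repair the shadowing orbit.

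\textbf{Main obstacle.} The delicate point is the last one. For flows, singularities normally defeat shadowing through \emph{timing} --- a true orbit cannot match the time a pseudo orbit spends near $\sigma$ --- but under oriented shadowing all timing obstructions disappear, so the argument must instead isolate a genuinely transverse, reparametrization-invariant obstruction from the homoclinic geometry near $\sigma_Y$; this is what makes the linear estimates of section \ref{sec.linear} and the weak-space Lemma \ref{lem.weakspace} indispensable, and it is where I expect the real difficulty to lie. A secondary technical hurdle is the reduction step itself: creating the homoclinic connection by a $C^1$-small perturbation while keeping exact control of the eigenvalues at $\sigma_Y$, so that they form the configuration the local construction requires, and while remaining inside the open set $\Int^1(\OS(M))$.
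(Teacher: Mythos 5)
Your overall architecture matches the paper's: you first pass to the star condition (this is Lemma~\ref{lem.star}, cited by the paper from \cite{PT} rather than reproved), then invoke the Gan--Wen structure theorem for star flows (Theorem~\ref{thm.GW}) to reduce $\Omega$-stability to the absence of a singularity accumulated by periodic orbits, then use the connecting lemma to manufacture a homoclinic connection at such a singularity (Lemma~\ref{lem.pertubation}), and finally invoke the key Lemma~\ref{lem.main}. That is exactly the paper's chain of reductions.

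Where your sketch drifts is in the description of the pseudo orbit and the contradiction inside Lemma~\ref{lem.main}. The paper's pseudo orbit does not ``dwell near $\sigma_Y$ for an arbitrarily long time'' on ``each passage'' and the obstruction is not that the shadowing orbit ``becomes asymptotic to $\sigma_Y$ in one time direction, which is incompatible with its having to return near $\sigma_Y$ later along $\Gamma$.'' In fact the pseudo orbit makes exactly one jump: it approaches $\sigma$ backwards in time through the point $O_s^-$ on the side of the weak direction $E^c_\sigma$ opposite to $\Gamma$, jumps at $t=0$ to the unstable side near $O_u$, then follows $\Gamma$ once around and converges to $\sigma$ as $t\to+\infty$. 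The transverse obstruction is Lemma~\ref{lem.notinstable}: because the shadowing orbit must cross $\Sigma_s^{-}$ near $O_s^-$, the long local passage time near $\sigma$ squeezes it into the cone $\C^u_\beta$ at $\Sigma_u^0$ (Lemma~\ref{lem.conecontr}), and then the global Poincar\'e map $Q$ (transversality, Lemma~\ref{lem.cone}) carries it to a point of $\Sigma_s$ off $L^s=W^s_{\loc}(\sigma)\cap\Sigma_s$; on the other hand, since the pseudo orbit's tail sits in $W^s_{\loc}(\sigma)$ forever, the shadowing orbit's forward orbit is trapped in $U_r(\sigma)$, hence must lie on $W^s_{\loc}(\sigma)$. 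These two conclusions contradict. So the contradiction is ``off $W^s_{\loc}(\sigma)$ vs.\ on $W^s_{\loc}(\sigma)$,'' not ``asymptotic vs.\ returning.'' You should also note the separate elementary case $\dim E^s_\sigma=1$, which the paper treats first and which has no weak direction $E^c_\sigma$; there the same one-jump pseudo orbit forces the shadowing orbit onto a one-dimensional stable manifold and one rules out both of its branches directly, without the cone analysis of section~\ref{sec.linear}.
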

We would like to note that for manifolds of the dimension $3$ or less \cite{PilTikhDAN, TikhVest} and for vector fields without singularities on manifolds of any dimension \cite{LS} it was proved that $C^1$-robustly oriented shadowing property is equivalent to structural stability. In the present paper we suggest to the reader to have in mind vector fields with singularities on at least 4-dimensional manifolds. In that case there exists essential difference between oriented and standard shadowing properties \cite{TikhOrientNotSt} and certain non-structurally stable examples of vector fields with robust shadowing property exists \cite{PT}.

\section{Star vector fields}\label{secStar}

In the proofs we need some results about star
vector fields. Recall that a vector field $X\in\X^1(M)$ is a {\it star
vector field} on $M$ if $X$ has a $C^1$ neighborhood $\U$ in
$\X^1(M)$ such that, for every $Y\in\U$, every singularity of $Y$
and every periodic orbit of $Y$ is hyperbolic. Denote by
$\X^*(M)$ the set of star vector fields on $M$.

It was proved in \cite{PT} that every vector field satisfying the $C^1$-robustly oriented shadowing property is a star vector field.

\begin{lem}\label{lem.star}
$\Int^1(\OS(M))\subset \X^*(M).$
\end{lem}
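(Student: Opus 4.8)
The plan is to prove the contrapositive of a localized version: if $X \in \Int^1(\OS(M))$ but $X \notin \X^*(M)$, then $X$ has a $C^1$-nearby vector field $Y$ with a non-hyperbolic singularity or non-hyperbolic periodic orbit, and I will perturb once more to destroy the oriented shadowing property, contradicting $Y \in \OS(M)$ (which holds since $\Int^1(\OS(M))$ is open and we may keep all perturbations inside it). So the real content is: \emph{a vector field with a non-hyperbolic singularity or a non-hyperbolic periodic orbit cannot be $C^1$-robustly oriented shadowing.} I would treat the two cases separately.

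For the \textbf{periodic-orbit case}: suppose $Y$ has a non-hyperbolic closed orbit $\gamma$, i.e. the Poincaré return map $P$ on a transverse section has an eigenvalue of modulus $1$ on its derivative at the fixed point. By an arbitrarily small $C^1$ perturbation supported near $\gamma$ I would bring the return map into a normal form having either (i) a small interval of fixed points transverse to the flow, or (ii) a fixed point with $DP$ having $+1$ or $-1$ as an eigenvalue with a nonlinear term that creates, after a further tiny perturbation, two nearby periodic orbits with different (finite) periods, or (iii) a complex pair on the unit circle giving an invariant circle for $P$. In each situation I construct a $d$-pseudo orbit—by jumping between the two nearby orbits, or by spiraling along the center manifold of $P$—whose only candidate shadowing orbit would have to follow a periodic orbit of a wrong period, and then a reparametrization argument shows no increasing homeomorphism $h$ can keep the tracking error below a fixed $\varepsilon_0$ for all time; the obstruction is exactly the accumulated phase/period discrepancy, which forces the orbit to drift a definite distance away. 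This is essentially the flow analogue of Sakai's argument for diffeomorphisms, with the extra bookkeeping for reparametrizations handled as in \cite{P, K}.

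For the \textbf{singularity case}: suppose $Y$ has a singularity $\sigma$ with $DY(\sigma)$ having an eigenvalue on the imaginary axis. After a small $C^1$ perturbation I may assume the linear part is in a convenient normal form and $Y$ is linear near $\sigma$. If $0$ is an eigenvalue, the center manifold contains a curve of slow motion (or, after perturbation, a line of singularities), and a pseudo orbit crawling along it cannot be oriented-shadowed because any real orbit near $\sigma$ either sits at $\sigma$ forever or escapes, and in neither case can it track the slow drift uniformly. If a nonzero purely imaginary pair $\pm i\omega$ occurs, the center manifold carries rotation; by a further perturbation I create either a small periodic orbit or a spiral, and reduce to (a variant of) the periodic case, again using that the reparametrization cannot fix the mismatch between the pseudo orbit's behavior and any genuine orbit's behavior near $\sigma$. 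Here the presence of the singularity is helpful rather than harmful: orbits near $\sigma$ spend unbounded time there, so a pseudo orbit can be made to ``wait'' near $\sigma$ and then leave in a direction no real shadowing orbit can match.

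The \textbf{main obstacle} I anticipate is the singularity case together with the reparametrization subtlety: because the oriented shadowing property allows an arbitrary increasing homeomorphism $h$, I must make sure the pseudo orbit I build cannot be shadowed even after an extreme time-reparametrization—for instance when a real orbit lingers near $\sigma$, a clever $h$ might try to compensate, so the contradiction has to come from a genuinely \emph{spatial} discrepancy (the orbit being pushed a fixed distance off the pseudo orbit) rather than a temporal one. Making that spatial obstruction robust under all of $h \in \Rep$ and under all nearby vector fields simultaneously is the delicate point; the rest is standard perturbation theory in the $C^1$ topology (Franks' lemma / Pugh's closing-type arguments near $\gamma$ or $\sigma$) and is already encapsulated in the cited work \cite{PT}, so in the actual write-up one simply invokes their result.
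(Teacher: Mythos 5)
The paper gives no argument here at all: it states that Lemma~\ref{lem.star} ``was proved in \cite{PT}'' and moves on, and your proposal ends at exactly the same place by conceding that ``in the actual write-up one simply invokes their result.'' So the operative proof step is identical. That said, the exploratory sketch you give along the way has a real flaw that is worth pointing out precisely because it touches the central subtlety of this paper. In your periodic-orbit case (ii) you propose two nearby periodic orbits with different finite periods and claim the obstruction to shadowing is the ``accumulated phase/period discrepancy.'' That is a valid obstruction for \emph{standard} shadowing, where $h\in\Rep(\e)$ is forced close to the identity, but it is \emph{not} an obstruction for oriented shadowing: here $h\in\Rep$ is an arbitrary increasing homeomorphism, so any amount of accumulated time-lag can be absorbed by reparametrization without the shadowing orbit leaving a thin tube around the closed orbit. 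A period-mismatch argument therefore proves nothing against $\OS(M)$. The obstruction has to be spatial, as you yourself flag in the last paragraph; concretely, the usable device is case (i)-type: perturb so the return map has a whole arc of fixed points (a continuum of nearby periodic orbits filling a surface), build a pseudo-orbit that drifts transversally across this family a definite distance $>2\e$, and observe that any single real orbit is confined to one leaf and hence stays spatially separated from part of the pseudo-orbit regardless of $h$. Similar spatial drift constructions handle the complex-eigenvalue and singularity cases. Since you ultimately delegate to \cite{PT} anyway, this does not invalidate the proposal as a proof of the lemma, but you should not present the period-discrepancy reasoning in (ii) as if it worked for $\OS(M)$ — the whole point of this paper's setting is that it does not.
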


We say that a point $x\in M$ is {\it preperiodic} of $X$,  if for any
$C^1$ neighborhood $\U$ of
$X$ in $\X^1(M)$ and any neighborhood $U$ of $x$ in $M$, there exists $Y\in\U$ and $y\in U$ such that $y$
is a regular periodic point of $Y$. Denote by
$\Per_*(X)$ the set of preperiodic points of $X$. We will use the
following result which is proved in \cite{GW}.

\begin{thm}\label{thm.GW}
Let $X\in \X^*(M)$. If
$\Sing(X)\cap\Per_*(X)=\emptyset$, then $X$ is $\Omega$-stable.
\end{thm}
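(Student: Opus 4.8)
The goal is to establish that $X$ satisfies Axiom A$'$ together with the no-cycle condition, whence $\Omega$-stability follows from the $\Omega$-stability theorem for flows (Pugh--Shub). Throughout, fix a small $C^1$-neighborhood $\mathcal{U}$ of $X$ witnessing the star property.

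First I would separate the singularities from the recurrent behaviour. Each $\sigma\in\Sing(X)$ is hyperbolic, so $\Sing(X)$ is finite, hence a hyperbolic invariant set; moreover $\Per_*(X)$ is easily seen to be closed, so the hypothesis gives $\mathrm{dist}(\Sing(X),\Per_*(X))=\delta>0$. By Pugh's closing lemma for flows (a consequence of Hayashi's connecting lemma), every \emph{regular} nonwandering point of $X$ is preperiodic, i.e.\ $\Omega(X)\setminus\Sing(X)\subset\Per_*(X)$; hence no regular nonwandering point lies within $\delta$ of a singularity, $\Sing(X)$ is isolated in $\Omega(X)$, and $\Omega_r:=\Omega(X)\setminus\Sing(X)$ is a compact, $X$-invariant, \emph{nonsingular} set with $\|X\|\ge c>0$ on a neighborhood of it and $\mathrm{dist}(\Omega_r,\Sing(X))\ge\delta$. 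The hypothesis $\Sing(X)\cap\Per_*(X)=\emptyset$ enters exactly here: it prevents recurrent orbits from accumulating on a singularity, which is precisely what destroys $\Omega$-stability for general star flows (singular horseshoes, geometric Lorenz-like flows).

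The core of the proof is that $\Omega_r$ is a hyperbolic set. On the normal bundle $N_{\Omega_r}=\bigcup_{x\in\Omega_r}\langle X(x)\rangle^{\perp}$ consider the linear Poincar\'e flow $\psi_t$; I would show in three steps that $\psi_t$ is uniformly hyperbolic over $\Omega_r$. (i) First produce a dominated splitting $N_{\Omega_r}=\Delta^s\oplus\Delta^u$ for $\psi_t$: Franks' lemma for flows realizes $C^1$-small perturbations of the linear Poincar\'e flow along periodic orbits, the star condition then forbids arbitrarily ``weak'' periodic data, and the ergodic closing lemma for flows (a flow analogue of Ma\~n\'e's) promotes this into a uniform domination over all of $\Omega_r$ --- a step that uses that $\Omega_r$ stays a definite distance from $\Sing(X)$. (ii) Upgrade domination to hyperbolicity: by the closing lemma again it suffices to check that along each periodic orbit $\gamma\subset\Omega_r$ the Poincar\'e return map contracts $\Delta^s|_\gamma$ and expands $\Delta^u|_\gamma$; a failure with arbitrarily weak rates would, via a Franks-type perturbation, create a non-hyperbolic periodic orbit inside $\mathcal{U}$, contradicting $X\in\X^*(M)$. (iii) Finally, since $\Omega_r$ is nonsingular, the $\psi_t$-hyperbolic splitting of $N_{\Omega_r}$ together with the flow direction $\langle X\rangle$ as the center assembles into a hyperbolic splitting $T_xM=E^s_x\oplus\langle X(x)\rangle\oplus E^u_x$ over $\Omega_r$; this passage from linear-Poincar\'e-flow hyperbolicity to tangent-flow hyperbolicity is standard and essentially linear-algebraic, using the lower bound $\|X\|\ge c$. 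Therefore $\Omega(X)=\Sing(X)\sqcup\Omega_r$ is a hyperbolic set.

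It remains to extract Axiom A$'$ and the no-cycle condition. Since $\Omega(X)$ is hyperbolic, the closing lemma applied inside the hyperbolic set gives $\Omega(X)=\overline{\Per(X)\cup\Sing(X)}$, so $X$ satisfies Axiom A$'$ and $\Omega(X)$ admits a spectral decomposition into finitely many basic sets. A cycle among these basic sets could be destroyed by an arbitrarily $C^1$-small perturbation (connecting lemma), producing a periodic orbit: if the cycle meets a singularity $\sigma$, that periodic orbit runs arbitrarily close to $\sigma$, forcing $\sigma\in\Per_*(X)$, a contradiction; if it runs only through basic sets of $\Omega_r$, one obtains, exactly as in the diffeomorphism case, either a non-hyperbolic periodic orbit or a homoclinic tangency, both incompatible with the star condition. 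Hence the no-cycle condition holds, and Axiom A$'$ together with the no-cycle condition yields that $X$ is $\Omega$-stable. I expect the genuine obstacle to be steps (i)--(ii): deriving uniform hyperbolicity of the linear Poincar\'e flow over the nonsingular set $\Omega_r$ from the star condition requires the full Liao--Ma\~n\'e machinery adapted to flows (Franks' lemma for flows, the ergodic closing lemma for flows, Liao's selecting and sifting lemmas), and it is there that the hypothesis $\Sing(X)\cap\Per_*(X)=\emptyset$ is indispensable.
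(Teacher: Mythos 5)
There is nothing in the paper to compare your argument with: Theorem~\ref{thm.GW} is not proved in this paper at all, but is quoted as an external result of Gan and Wen \cite{GW} (``Nonsingular star flows satisfy Axiom A and the no-cycle condition''). Your outline is, in substance, a summary of the strategy of that cited paper: use the $C^1$ closing lemma to see that every regular nonwandering point is preperiodic, so the hypothesis $\Sing(X)\cap\Per_*(X)=\emptyset$ forces the regular part of $\Omega(X)$ to stay a definite distance from the singularities; then derive uniform hyperbolicity of the linear Poincar\'e flow over this nonsingular compact invariant set from the star condition via the Liao--Ma\~n\'e machinery, transfer it to hyperbolicity of the tangent flow (legitimate only because the set is nonsingular and $\|X\|$ is bounded below there, as you note), and finish with the spectral decomposition and the no-cycle condition.

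As a proof, however, the proposal has a genuine gap: the steps you label (i)--(ii), and the no-cycle argument, are not routine applications of named lemmas --- they \emph{are} the content of \cite{GW}, and invoking ``Franks' lemma for flows, the ergodic closing lemma for flows, Liao's selecting and sifting lemmas'' does not discharge them. In particular, your claim that the no-cycle part goes through ``exactly as in the diffeomorphism case'' is misleading: for flows the star condition together with Axiom A does \emph{not} imply the no-cycle condition (this is the Li--Wen example cited in the paper as \cite{LW}), so the flow case is genuinely different and the hypothesis $\Sing(X)\cap\Per_*(X)=\emptyset$ must be exploited in the cycle-breaking argument itself, not only for cycles visibly passing through a singularity; the assertion that a cycle inside the regular part yields ``a non-hyperbolic periodic orbit or a homoclinic tangency'' also needs a perturbation argument rather than a one-line appeal to the diffeomorphism theory. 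A minor historical slip: Pugh's $C^1$ closing lemma for flows is an independent (and earlier) result, not a consequence of Hayashi's connecting lemma, though nothing in your reduction depends on this. In short, your text is a correct roadmap to the theorem of \cite{GW}, but within this paper the statement is simply cited, and your sketch should likewise either cite \cite{GW} or supply the hard analytic core it presently only names.
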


We also need some results about the dominated splitting in the tangent space of singularities. Let $\sigma$ be a hyperbolic singularity of $X$. Denote by
$$
{\rm Re}(\lambda_s)\le \cdots \le {\rm Re}(\lambda_2)\le{\rm
Re}(\lambda_1)<0<{\rm Re}(\gamma_1)\le{\rm
Re}(\gamma_2)\le\cdots\le{\rm Re}(\gamma_u),
$$
the eigenvalues of ${\rm D}X(\sigma)$.
 The {\it saddle value} of $\sigma$ is
$$
{\rm SV}(\sigma)={\rm Re}(\lambda_1)+{\rm Re}(\gamma_1).
$$

We write $\Ind(\sigma)$ the {\it index} of a hyperbolic singularity $\sigma\in \Sing(X)$ which is the dimension of the stable
manifold of $\sigma$. We write $\Ind(p)$ the {\it index} of a regular hyperbolic periodic point $p\in\Per(X)$ which is the dimension of the strong
stable manifold of $p$.

Recall that a {\it homoclinic connection} $\Gamma$ of a singularity $\sigma$ is the closure of a orbit of a regular point which is contained in both the stable and the unstable manifolds of $\sigma$.
The following lemma is a simplified version of results in \cite{SS}.

\begin{lem}\label{lem.shilnikov}
Let $\sigma\in\Sing(X)$ be a singularity of vector field $X\in \X^1(X)$ exhibiting a homoclinic connection $\Gamma$. If ${\rm SV}(\sigma)\ge 0$, then for any $C^1$ neighborhood $\U$ of $X$ in $\X^1(M)$ and any neighborhood $U$ of $\Gamma$ in $M$, there exists $Y\in\U$ and $p\in\Per(Y)$ such that $\Orb_Y(p)\subset U$ and $\Ind(p)=\Ind(\sigma)-1$.
\end{lem}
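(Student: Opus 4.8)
The plan is to carry out the classical Shilnikov analysis of the first-return map near the homoclinic connection $\Gamma$ and to read off the index of the bifurcating periodic orbit from the sign of the saddle value; since the statement is advertised as a simplified version of results in \cite{SS}, the core of the argument is to set up this return map with enough precision to track indices and to reduce the borderline case to the strict one. First I would dispose of the case $\mathrm{SV}(\sigma)=0$: a $C^1$-small perturbation of $X$ supported in an arbitrarily small neighborhood of $\sigma$, changing $DX(\sigma)$ slightly (the vector-field analog of Franks' lemma), produces $X_1\in\U$ with $\sigma\in\Sing(X_1)$, $\Ind_{X_1}(\sigma)=\Ind_X(\sigma)$, and $\mathrm{SV}_{X_1}(\sigma)>0$. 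This perturbation will in general break the connection, replacing it by an orbit that follows $\Gamma$ and returns to within a distance comparable to the size of the perturbation of $W^s_{\mathrm{loc}}(\sigma)$. Hence it suffices to treat a vector field with $\mathrm{SV}(\sigma)>0$ together with a one-parameter family $\{X_\mu\}$ of $C^1$-small perturbations, supported along the global arc of $\Gamma$ and so not affecting $DX(\sigma)$, that unfolds the (possibly already slightly broken) connection, with splitting parameter $\mu$.

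After a further $C^1$-small perturbation I may work in coordinates near $\sigma$ in which the local transition estimates hold (cf. Section \ref{sec.linear}), say with $X$ linear in a small neighborhood $V\subset U$ of $\sigma$. Choose cross-sections $\Sigma^{\mathrm{in}},\Sigma^{\mathrm{out}}\subset V$ transverse to the incoming and outgoing branches of $\Gamma$ and transverse, respectively, to the leading stable and leading unstable directions, and form the first-return map $P_\mu=P^{\mathrm{glob}}_\mu\circ P^{\mathrm{loc}}$ on $\Sigma^{\mathrm{in}}$. In adapted coordinates $(x^{ss},x^u_1,x^{uu})$ on $\Sigma^{\mathrm{in}}$ --- the strong-stable part, of dimension $\Ind(\sigma)-1$; the leading-unstable coordinate $x^u_1$; and the strong-unstable part --- the local map $P^{\mathrm{loc}}$ has the Shilnikov form: $x^u_1$ is sent to a term of order $(x^u_1)^{\alpha}$, where $\alpha=|\mathrm{Re}\lambda_1|/\mathrm{Re}\gamma_1$, while the strong-stable block is super-contracting and the strong-unstable block super-expanding, uniformly as the transition time tends to infinity (that is, as $x^u_1\to0$).

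Composing with the uniformly bounded, invertible global map $P^{\mathrm{glob}}_\mu$, and after a harmless shear perturbation of the global arc ensuring that the leading derivative $c$ of $P^{\mathrm{glob}}_0$ is nonzero, a cone-field / dominated-splitting argument endows $DP_\mu$ with an invariant splitting into a super-contracting subspace of dimension $\Ind(\sigma)-1$, a super-expanding subspace, and a one-dimensional leading direction on which $P_\mu$ acts, to leading order, as $\xi\mapsto c\,\xi^{\alpha}+\mu$. Since $\mathrm{SV}(\sigma)>0$ is equivalent to $\alpha<1$, for a suitable small value of $\mu$ of the appropriate sign this one-dimensional map has a fixed point $\xi_*$ with $\xi_*\to0$ as $\mu\to0$ and with $|P'_\mu(\xi_*)|>1$; hence $P_\mu$ has a hyperbolic fixed point whose contracting subspace is exactly the super-contracting one, of dimension $\Ind(\sigma)-1$. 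This fixed point is a regular periodic point $p$ of $Y:=X_\mu$ with $\Ind(p)=\Ind(\sigma)-1$; its orbit spends a long time near $\sigma$ and then follows the global arc of $\Gamma$, so it converges to $\Gamma$ in the Hausdorff metric as $\mu\to0$, and choosing all perturbations small enough yields $\Orb_Y(p)\subset U$ and $Y\in\U$.

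The step I expect to be the main obstacle is the precision of the local-transition estimate, and the accompanying cone-field argument, needed both to pin down the index through a dominated splitting and to produce a genuinely hyperbolic fixed point of the correct index; a secondary point is to verify that the reduction from $\mathrm{SV}(\sigma)=0$ to $\mathrm{SV}(\sigma)>0$ neither alters $\Ind(\sigma)$ nor spoils the localization inside $U$. When $\lambda_1$ is a complex conjugate pair (the saddle-focus situation) the one-dimensional leading direction is replaced by a two- or three-dimensional block carrying a Smale-horseshoe-type return map, and one invokes the corresponding case of \cite{SS}; the index bookkeeping is analogous, which is why the clean conclusion $\Ind(p)=\Ind(\sigma)-1$ persists.
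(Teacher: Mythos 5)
The paper does not prove this lemma; it is stated as ``a simplified version of results in \cite{SS}'' and used as a black box, so there is no internal argument against which to compare your attempt. Your sketch is a reconstruction of the standard Shilnikov return-map analysis, which is indeed the source of the cited result, and the outline is sound: reduce to $\mathrm{SV}(\sigma)>0$ by a Franks-type perturbation of $DX(\sigma)$ (keeping $\sigma$ hyperbolic, hence $\Ind(\sigma)$ unchanged), re-splice the broken loop with a splitting parameter $\mu$, build the first-return map as $P^{\mathrm{glob}}_\mu\circ P^{\mathrm{loc}}$ with the local piece in Shilnikov form, obtain a dominated splitting of $DP_\mu$ with an $(\Ind(\sigma)-1)$-dimensional super-contracting block, and read off the index from the fact that $\alpha<1$ (equivalent to $\mathrm{SV}>0$) makes the leading one-dimensional map $\xi\mapsto c\,\xi^{\alpha}+\mu$ expand at its near-zero fixed point.

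The two heavy technical steps --- the cone-field estimate that makes the splitting survive composition with $P^{\mathrm{glob}}_\mu$ and identifies the leading direction as the \emph{unique} non-super-contracting, non-super-expanding one (this is what forces the index count), and the saddle-focus case where the leading block is $2$- or $3$-dimensional and the return dynamics is a horseshoe --- are exactly what the paper outsources to \cite{SS}, and you flag both, so as a blind reconstruction this is adequate. If you wanted a self-contained proof, the saddle-focus case deserves a real argument rather than ``the bookkeeping is analogous'': the horseshoe produces infinitely many periodic orbits, and one must verify that the cone-field splitting pins the index of \emph{some} (in fact each) of them to $\Ind(\sigma)-1$. A smaller bookkeeping point: you spend the perturbation budget twice (once on $DX(\sigma)$, once on $\mu$, and once more on the linearizing/shear normalizations); in a careful writeup the order of quantifiers --- fix $\U$ and $U$ first, then take the $DX(\sigma)$-perturbation small enough, then $\mu$ smaller still --- should be made explicit so that $Y\in\U$ and $\Orb_Y(p)\subset U$ both survive.
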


By using the same argument in the proof of \cite[Lemma 4.1]{LGW}, we can get:

\begin{lem}\label{lem.domspl}
Let $X\in\X^*(M)$ and $\sigma\in\Sing(X)$ be a singularity of $X$. If there exists a integer $1\le I\le \Ind(\sigma)-1$ such that, for any $C^1$ neighborhood $\U$ of $X$ in $\X^*(M)$ and any neighborhood $U$ of $\sigma$ in $M$, there exists $Y\in\U$ and $p\in U\cap\Per(Y)$ with $\Ind(p)=I$. Then $E^s_\sigma$ splits into a dominated splitting
$$
E^s_\sigma=E^{ss}_\sigma\oplus E^c_\sigma,
$$
where $\dim E^{ss}_\sigma=I$.
\end{lem}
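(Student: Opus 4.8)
The plan is to transfer the existence of periodic orbits of index $I$ near $\sigma$ into a statement about the linearized flow on $E^s_\sigma$, and then invoke the abstract dominated-splitting criterion from \cite[Lemma 4.1]{LGW}. First I would set up the following picture: fix $Y$ in a small $C^1$ neighborhood $\U\subset\X^*(M)$ and $p\in U\cap\Per(Y)$ with $\Ind(p)=I$, where $U$ is a small neighborhood of $\sigma$. Since $\sigma$ is a hyperbolic singularity, by the Hartman–Grobman theorem (or rather, by persistence of the hyperbolic structure under $C^1$-perturbation) for $\U$ and $U$ small enough the orbit $\Orb_Y(p)$ spends almost all of its time inside a small neighborhood of $\sigma$ where $Y$ is $C^1$-close to the linear field $\mathrm{D}X(\sigma)$; the periodic orbit is forced to shadow the linear dynamics and, in particular, a long piece of it stays in a cone around $E^s_\sigma\oplus E^u_\sigma$. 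The key point is that because $\Ind(p)=I\le\Ind(\sigma)-1$, the strong stable subbundle along $\Orb_Y(p)$ has dimension exactly $I$, which is strictly less than $\dim E^s_\sigma$; combined with hyperbolicity of $p$, the $I$-dimensional strong stable direction and its complement inside the "stable part" must exhibit a quantitative separation of contraction rates, because otherwise a further small perturbation would destroy hyperbolicity of $p$ — contradicting $X\in\X^*(M)$.

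The main technical work is to make the previous paragraph uniform in $Y$ and $p$. Concretely, one uses the star condition to get a uniform lower bound on the "domination gap" for periodic orbits of index $I$ in a neighborhood of $\sigma$: there exist constants $C>0$, $\lambda\in(0,1)$, and a period/time threshold such that for every such $(Y,p)$ the linearized cocycle $\mathrm{D}\phi_{Y,t}$ restricted to the period of $p$ satisfies an $(C,\lambda)$-dominated inequality between the $I$-dimensional strong stable bundle and the rest. This is exactly the kind of statement extracted from Liao/Mañé-type arguments (uniform hyperbolicity of the linear Poincaré flow over $\overline{\Per_*}$ for star flows), which is the hypothesis feeding \cite[Lemma 4.1]{LGW}. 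Letting $U$ shrink to $\{\sigma\}$ and $Y\to X$, these periodic orbits accumulate on $\sigma$ and the uniformly dominated cocycles pass to the limit: one obtains a $\mathrm{D}X(\sigma)$-invariant $(C,\lambda)$-dominated splitting of the stable eigenspace $E^s_\sigma=E^{ss}_\sigma\oplus E^c_\sigma$ with $\dim E^{ss}_\sigma=I$. (Here $E^{ss}_\sigma$ is spanned by the generalized eigenspaces corresponding to the $I$ eigenvalues of most negative real part, forced by the index-$I$ condition.)

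The step I expect to be the main obstacle is establishing the uniform domination gap — i.e.\ that the constants $C,\lambda$ can be chosen independently of $Y\in\U$ and of the particular periodic orbit $p\in U$, rather than merely getting hyperbolicity of each individual $p$. This is where the star condition must be used in its full strength (no non-hyperbolic periodic orbit survives \emph{any} $C^1$-small perturbation), typically via a Franks-lemma perturbation argument: if the contraction rates within $E^s$ along arbitrarily long stretches of $\Orb_Y(p)$ were not uniformly separated, one could perturb $Y$ inside $\U$ to create a periodic orbit with a neutral or expanding eigenvalue, violating $Y\in\X^*(M)$. Since the excerpt explicitly says this follows "by the same argument as in the proof of \cite[Lemma 4.1]{LGW}", I would quote that argument rather than reproduce it: the genuinely new content here is only the reduction that places us in the setting of that lemma, namely identifying the relevant cocycle as the linearized flow on $E^s_\sigma$ along periodic orbits of a fixed index accumulating on $\sigma$, and observing that the limiting dominated splitting is necessarily $E^{ss}_\sigma\oplus E^c_\sigma$ with $\dim E^{ss}_\sigma=I$ by the eigenvalue ordering.
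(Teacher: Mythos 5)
Your proposal takes essentially the same route as the paper. The paper gives no proof of Lemma~\ref{lem.domspl} beyond the citation to \cite[Lemma 4.1]{LGW}, and your sketch correctly identifies the driving mechanism of that reference — the star property yields a uniform domination gap for the linearized cocycle over periodic orbits of fixed index $I$ accumulating at $\sigma$, which passes to the limit to produce the spectral gap on $E^s_\sigma$ with $\dim E^{ss}_\sigma=I$ — while deferring the actual uniformity estimates to the cited argument, exactly as the paper does.
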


Combining Lemma~\ref{lem.shilnikov} with Lemma~\ref{lem.domspl}, we obtain directly the following lemma about  singularities of star vector fields exhibiting a homoclinic connection.

\begin{lem}\label{lem.weakspace}
Let $X\in\X^*(M)$ be a star vector field and $\sigma\in\Sing(X)$ be a singularity of $X$ exhibiting a homoclinic connection.
\begin{itemize}
  \item If ${\rm SV}(\sigma)\ge 0$ and $\dim E^s_\sigma\ge 2$, then $E^s_\sigma$ splits into a dominated splitting
$$
E^s_\sigma=E^{ss}_\sigma\oplus E^c_\sigma,
$$
where $\dim E^{c}_\sigma=1$;
  \item If ${\rm SV}(\sigma)\le 0$ and $\dim E^u_\sigma\ge 2$, then $E^u_\sigma$ splits into a dominated splitting
$$
E^u_\sigma=E^{c}_\sigma\oplus E^{uu}_\sigma,
$$
where $\dim E^{c}_\sigma=1$.
\end{itemize}
\end{lem}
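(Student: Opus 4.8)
The plan is to combine the two cited lemmas exactly as the statement announces: Lemma~\ref{lem.shilnikov} produces, via $C^1$-small perturbations and near $\Gamma$, a hyperbolic periodic orbit whose index drops by one relative to that of $\sigma$; and Lemma~\ref{lem.domspl} then upgrades the existence of such nearby periodic orbits (of a fixed, intermediate index) into a dominated splitting of the relevant invariant subbundle at $\sigma$. The only things to check are that the hypotheses of Lemma~\ref{lem.domspl} are genuinely satisfied in each of the two cases, and that the resulting $E^{ss}_\sigma$ (resp. $E^{uu}_\sigma$) has the codimension claimed, namely that the complementary center bundle is one-dimensional.

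First I would treat the case $\mathrm{SV}(\sigma)\ge 0$ and $\dim E^s_\sigma\ge 2$. Write $k=\Ind(\sigma)=\dim E^s_\sigma\ge 2$. Since $X$ is a star vector field and $\sigma$ exhibits a homoclinic connection $\Gamma$ with $\mathrm{SV}(\sigma)\ge 0$, Lemma~\ref{lem.shilnikov} gives, for every $C^1$-neighborhood $\U$ of $X$ and every neighborhood $U$ of $\Gamma$ (hence in particular for every neighborhood of $\sigma$, shrinking $U$ around the point of $\Gamma$ at $\sigma$), a vector field $Y\in\U$ and a periodic point $p\in\Per(Y)$ with $\Orb_Y(p)\subset U$ and $\Ind(p)=k-1$. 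Thus the integer $I:=k-1$ satisfies $1\le I\le \Ind(\sigma)-1$ (here $I\ge 1$ uses $k\ge 2$, and $I\le k-1$ is immediate), and the hypothesis of Lemma~\ref{lem.domspl} holds with this $I$. Applying Lemma~\ref{lem.domspl} we get a dominated splitting $E^s_\sigma=E^{ss}_\sigma\oplus E^c_\sigma$ with $\dim E^{ss}_\sigma=I=k-1$, whence $\dim E^c_\sigma=k-(k-1)=1$, as required. One subtlety worth a sentence in the write-up: Lemma~\ref{lem.domspl} asks for $Y$ in a neighborhood inside $\X^*(M)$, so one should note that the star property is $C^1$-open, so intersecting $\U$ with the star neighborhood of $X$ is harmless — the periodic orbit supplied by Lemma~\ref{lem.shilnikov} can be taken with $Y$ in that smaller neighborhood.

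The case $\mathrm{SV}(\sigma)\le 0$ with $\dim E^u_\sigma\ge 2$ is symmetric: it is obtained by applying the first case to the time-reversed vector field $-X$. Indeed $\sigma$ is still a hyperbolic singularity of $-X$, the eigenvalues of $\mathrm{D}(-X)(\sigma)$ are the negatives of those of $\mathrm{D}X(\sigma)$, so $\mathrm{SV}_{-X}(\sigma)=-\mathrm{SV}_X(\sigma)\ge 0$, the stable space of $\sigma$ for $-X$ is $E^u_\sigma$, and $\Gamma$ remains a homoclinic connection for $-X$; moreover $X\mapsto -X$ is a $C^1$-homeomorphism of $\X^1(M)$ preserving $\X^*(M)$. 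The first case then yields a dominated splitting $E^u_\sigma=E^c_\sigma\oplus E^{uu}_\sigma$ for $-X$ with $\dim E^c_\sigma=1$; reversing time back turns this into the asserted dominated splitting for $X$ (domination is symmetric under reversing both the flow and the order of the summands). I do not anticipate a serious obstacle here — the content is entirely in Lemmas~\ref{lem.shilnikov} and~\ref{lem.domspl}; the mild bookkeeping point to be careful about is precisely the admissibility range $1\le I\le\Ind(\sigma)-1$, which is exactly why the dimension hypotheses $\dim E^s_\sigma\ge 2$ (resp. $\dim E^u_\sigma\ge 2$) are imposed.
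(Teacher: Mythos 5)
Your proposal is correct and follows exactly the route the paper takes: combining Lemma~\ref{lem.shilnikov} with Lemma~\ref{lem.domspl} (the paper in fact states the lemma as an immediate consequence of these two). Your added care about the admissibility range $1\le I\le\Ind(\sigma)-1$, about intersecting neighborhoods with the $C^1$-open set $\X^*(M)$, and the time-reversal reduction for the second bullet, are all sound and correctly filled-in details.
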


\section{Proof of Theorem~\ref{thm.MainThm}}\label{secProof}

The key step of the proof is the following lemma, which will be proved in the next sections.

\begin{lem}\label{lem.main}
Let $X\in \X^*(M)$. If there exists a singularity $\sigma\in \Sing(X)$ exhibiting a homoclinic connection, then
$X\not\in\Int^1(\OS(M))$.
\end{lem}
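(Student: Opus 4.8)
The plan is to argue by contradiction: suppose $X\in\X^*(M)\cap\Int^1(\OS(M))$ and that $X$ has a singularity $\sigma$ with a homoclinic connection $\Gamma$. The strategy is to build, for suitable small $d$, a $d$-pseudo orbit that cannot be $\varepsilon$-oriented shadowed for some fixed $\varepsilon>0$ no matter how the reparametrization is chosen. Because the oriented shadowing property is $C^1$-robust, it suffices to do this after a small $C^1$ perturbation, so I would first normalize the situation near $\sigma$. By Lemma~\ref{lem.weakspace}, after possibly replacing $X$ by $-X$ we may assume $\mathrm{SV}(\sigma)\le 0$ and, if $\dim E^u_\sigma\ge 2$, that $E^u_\sigma=E^c_\sigma\oplus E^{uu}_\sigma$ with $\dim E^c_\sigma=1$; the case $\dim E^u_\sigma = 1$ (or $\dim E^s_\sigma=1$) is the genuinely low-dimensional situation and should be handled separately, likely reducing to the known planar picture of a homoclinic loop at a saddle. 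So the essential case is a saddle $\sigma$ whose weakest unstable direction is a well-defined one-dimensional subbundle $E^c_\sigma$, dominated by the strong unstable bundle $E^{uu}_\sigma$.

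Next I would use the linearization / normal-form analysis promised in section~\ref{sec.linear} to control the flow in a neighborhood $U$ of $\Gamma$: on one hand a point starting near $W^u_{loc}(\sigma)$ but slightly off it leaves $U$ along a direction that is exponentially dominated by (or aligned with) $E^c_\sigma$, and on the other hand the transition along $\Gamma$ from a neighborhood of $\sigma$ back to itself (the ``global'' part of the loop) is a fixed diffeomorphism between small cross-sections. The key geometric point is that orbits which follow $\Gamma$ for a long time are squeezed toward the one-dimensional weak manifold, so the \emph{orientation} with which they return is essentially fixed; there is no freedom to ``turn around'' the loop the other way. This is where the hypothesis that $\sigma$ is a singularity (not a periodic orbit) matters: passage time near $\sigma$ is unbounded, so the contraction toward $E^c_\sigma$ is arbitrarily strong, and a pseudo orbit that jumps by a small amount $d$ transverse to $W^u_{loc}$ near $\sigma$ produces a genuine displacement of definite size and definite sign when it comes back around $\Gamma$.

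The construction of the bad pseudo orbit then runs as follows. Fix $\varepsilon$ comparable to the size of $U$. For a given $d>0$, take the pseudo orbit that shadows $\Gamma$ exactly except for a single jump of size $\sim d$ at a point close to $\sigma$, chosen so that the jump is in the $E^c_\sigma$ direction; concatenate copies of this so that the pseudo orbit makes $N$ near-loops along $\Gamma$ and then some extra controlled behaviour that forces any shadowing orbit to track it. A true orbit $\varepsilon$-oriented shadowing this pseudo orbit would have to stay in $U$, pass near $\sigma$, and re-emerge; by the linearization estimates its position relative to $W^u_{loc}(\sigma)$ after the loop is multiplied by a factor $>1$ (because $\mathrm{SV}(\sigma)\le 0$ forces the weak unstable rate to beat the strong stable rate — this is exactly the Shil'nikov-type inequality encoded in Lemma~\ref{lem.shilnikov}), so the accumulated error grows past $\varepsilon$ after finitely many loops, a contradiction once $N$ is large. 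The main obstacle, and where the real work of sections~\ref{sec.linear} and~\ref{secLProof} goes, is making the ``orientation is rigid under the loop'' statement quantitative uniformly in the perturbation — i.e. getting a linearization (or at least a $C^1$ normal form with dominated splitting) near $\sigma$ that survives a $C^1$-small perturbation with uniform constants, and correctly bookkeeping the reparametrization so that the time spent near $\sigma$ cannot be used to hide the growing transverse error. Handling the degenerate one-dimensional cases and the possibility that $\Gamma$ is not a single loop but accumulates in a more complicated way are the secondary technical points.
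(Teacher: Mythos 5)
Your preliminary reductions match the paper's (time-reversal normalization of $\SV(\sigma)$, Lemma~\ref{lem.weakspace} to get the one-dimensional weak bundle $E^c_\sigma$, linearization near $\sigma$, and the observation that long passages near $\sigma$ squeeze orbits toward the weak direction), but the final construction is genuinely different and your version has real gaps. The paper does not iterate around $\Gamma$ $N$ times and track accumulated error. It builds a \emph{single} $d$-pseudo orbit with exactly one jump near $\sigma$: the backward half enters the linearizing chart on the \emph{opposite} branch of the weak axis from $\Gamma$, through $O_s^-=(0,-O_s^c,0)$, while the forward half jumps onto $\Gamma$ itself, goes once around the loop, and converges to $\sigma$. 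Any orbit $\varepsilon$-oriented shadowing this pseudo orbit is forced to hit cross sections $\Sigma_s^-\to\Sigma_u\to\Sigma_s$ via the local Poincar\'e map $P$ and the global loop map $Q$. The dominated splitting $E^{ss}_\sigma\oplus E^c_\sigma$ squeezes $P(x)$ into a thin cone about $E^c_\sigma$ (Lemma~\ref{lem.conecontr}); the transversality Condition~\ref{cond2}, installed by a preliminary perturbation, makes $Q$ of that cone meet $L_s=W^s_{\loc}(\sigma)\cap\Sigma_s$ only at $O_s$ (Lemma~\ref{lem.cone}); hence $Q\circ P(x)\notin L_s$. But the forward tail of the pseudo orbit lies on $W^s_{\loc}(\sigma)$, which pins the shadowing orbit there and forces $Q\circ P(x)\in L_s$ --- an immediate contradiction, with no expansion estimate needed. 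The case $\dim E^s_\sigma=1$ is dispatched by an even simpler one-jump pseudo orbit, not by a separate planar theory.

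Your $N$-loop error-accumulation sketch does not close as written. You never specify the ``extra controlled behaviour'' that converts drift into a contradiction, and the asserted multiplicative factor $>1$ per return is neither proved nor available in the borderline case $\SV(\sigma)=0$, which the lemma must cover (Lemma~\ref{lem.weakspace} only gives the dominated splitting under $\SV(\sigma)\ge 0$, equality allowed). Moreover the domination hypothesis does not give you a definite expansion of the distance from $W^u_{\loc}(\sigma)$ under the loop map; it controls the \emph{ratio} of contraction rates inside $E^s_\sigma$, which is exactly what yields the cone squeezing and nothing more. You also omit two ingredients the paper's one-shot argument actually needs: the choice to enter from the opposite branch $O_s^-$ (without which $Q\circ P(x)$ could land at $O_s\in L_s$ and no contradiction arises), and the $C^1$-small perturbation achieving the transversality of $Q(L_u)$ with $L_s$ in Condition~\ref{cond2} (without which Lemma~\ref{lem.cone} fails). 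So while your intuition about squeezing toward $E^c_\sigma$ points in the right direction, the route you propose is a different and more delicate one, and as sketched it is not yet a proof.
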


To create a homoclinic connection by $C^1$ perturbations, we need the following uniform $C^1$ connecting lemma.

\begin{thm}\cite{W}\label{thm.connecting}
Let $X\in \X^1(M)$. For any $C^1$ neighborhood $\U \subset \X^1(M)$ of $X$ and any point $z\in M$ which is neither singular nor
periodic of $X$, there exist three numbers $\rho>1$, $T>1$ and $\delta_0>0$, together with a $C^1$ neighborhood $\U_1\subset \U$ of $X$ such that for any $X_1\in \U_1$, any $0<\delta<\delta_0$ and any two points
$x$, $y$ outside the tube $\Delta_{X_1,z}= \cup_{t\in [0,T]}B(\phi_{X_1,t}(z), \delta)$, if the positive $X_1$-orbit of $x$
and the negative $X_1$-orbit of $y$ both hit $B(z, \delta/\rho)$, then there exists $Y\in \U$
with $Y = X_1$ outside $\Delta_{X_1,z}$ such that $y$ is on the positive $Y$-orbit of $x$.
\end{thm}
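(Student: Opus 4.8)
This is the uniform $C^1$ connecting lemma of Wen; the plan is to adapt Hayashi's connecting lemma to the flow setting and, following Wen--Xia, to fix every quantitative parameter \emph{before} $X_1$ is chosen, using only bounds that are valid uniformly on $\U$. First I would localize near $z$: fix $T>1$; since $z$ is neither a singularity nor a periodic point of $X$, the map $t\mapsto\phi_{X,t}(z)$ is injective, so the arc $\gamma=\{\phi_{X,t}(z):0\le t\le T\}$ is embedded. Pick a small codimension-one cross section $\Sigma\ni z$ transverse to $X$ and a tubular chart $\Psi$ on a neighborhood $W$ of $\gamma$ in which $X$ becomes the constant field $\partial/\partial t$. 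Because $X\mapsto\phi_{X,t}$ is $C^1$-continuous uniformly for $t\in[0,T]$, there is a $C^1$ neighborhood $\U_1\subset\U$ of $X$ such that for every $X_1\in\U_1$ the arc $\gamma_1=\{\phi_{X_1,t}(z):0\le t\le T\}$ is still embedded, lies in $W$, is $C^1$-close to $\gamma$, and is nearly trivialized by $\Psi$; then take $\delta_0>0$ small enough that $\Delta_{X_1,z}\subset W$ for all $\delta<\delta_0$ and all $X_1\in\U_1$.

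The technical engine is a local pushing lemma in the chart: for any two points $p,q$ lying in a ball $B(w,r)\subset\Delta_{X_1,z}$ on nearby orbit segments of $X_1$, there is a perturbation $\xi$ of $X_1$, supported in a slightly enlarged sub-tube, with $\|\xi\|_{C^1}\le C\,\dist(p,q)/r$, whose flow redirects the $X_1$-orbit through $p$ onto the $X_1$-orbit through $q$ while the redirected segment stays inside $\Delta_{X_1,z}$; here $C$ depends only on $\dim M$, on $\sup_{Y\in\U}\|Y\|_{C^1}$, and on $\inf_W|X|$. Since the capture ball $B(z,\delta/\rho)$ sits deep inside the tube of transverse radius $\delta$, taking $r=\delta$ gives $\|\xi\|_{C^1}\le 2C/\rho$ for any $p,q\in B(z,\delta/\rho)$, so choosing $\rho$ large enough --- depending only on $C$, $\dim M$ and the $C^1$-radius of $\U$ --- forces $X_1+\xi\in\U$. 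This is exactly why the conclusion needs $\rho>1$ (in fact $\rho$ large); $\delta_0$ may be shrunk further so that the chart distortion stays controlled for all $X_1\in\U_1$.

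To assemble the connection, fix $X_1\in\U_1$, $0<\delta<\delta_0$, and $x,y\notin\Delta_{X_1,z}$ with $\phi_{X_1,a}(x)\in B(z,\delta/\rho)$ for some $a>0$ and $\phi_{X_1,-b}(y)\in B(z,\delta/\rho)$ for some $b>0$. Replacing $a$ by the last such entrance time along $\Orb^+_{X_1}(x)$ (before $y$ is reached) and $b$ by the first, one isolates the two relevant passages near $z$ inside a single thin sub-box of the chart; that the orbit does not re-enter the perturbation support between these two selected passages is Hayashi's non-return argument, which uses that $\gamma_1$ is an embedded arc. Applying the pushing lemma with $p=\phi_{X_1,a}(x)$ and $q=\phi_{X_1,-b}(y)$ yields $Y=X_1+\xi\in\U$ with $Y=X_1$ off $\Delta_{X_1,z}$: the $Y$-orbit of $x$ follows the $X_1$-orbit until it meets $\supp\xi$, is redirected there from $p$ to $q=\phi_{X_1,-b}(y)$, and thereafter --- since $Y=X_1$ outside the tube --- coincides with the $X_1$-orbit from $\phi_{X_1,-b}(y)$ to $y$; hence $y\in\Orb^+_Y(x)$.

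The main obstacle is global consistency rather than any single perturbation: a vector field change supported in $\Delta_{X_1,z}$ simultaneously alters every passage of an orbit through the tube, so one must guarantee that $\Orb^+_Y(x)$ actually reaches $p$ before being diverted elsewhere, and that after leaving $q$ it is not thrown back into $\supp\xi$. Hayashi's resolution --- a nested family of thin tubes, a careful choice of fundamental domains along $\gamma_1$, and localizing each push to one tiny sub-box so that the other passages miss its support --- is the technical heart of the argument, and the extra demand that $\rho$, $T$, $\delta_0$ and $\U_1$ be selected uniformly, before $X_1$ is given, is precisely Wen's refinement of it. A minor flow-specific issue is keeping track of the reparametrization time inside the tube so that ``$y$ lies on the positive $Y$-orbit of $x$'' holds literally; this is automatic because $\supp\xi$ is traversed in flow-time of order $T$.
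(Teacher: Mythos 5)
The paper itself does not prove Theorem~\ref{thm.connecting}: it is quoted as a black box from Wen's paper \cite{W} (which in turn builds on Hayashi's connecting lemma and on \cite{WX}), so there is no internal proof to compare with, and your text has to be judged as a free-standing sketch of Wen's argument. As such it correctly identifies the general strategy (a tube over an embedded orbit arc of length $T$, flow-box coordinates, elementary $C^1$-perturbations of size of order $({\rm distance\ moved})/({\rm radius\ of\ support})$, hence the role of $\rho$, and uniformity obtained by making all estimates depend only on $\U$), but it has a genuine gap at exactly the point where the theorem is hard.

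The gap is that you reduce the connection to a \emph{single} push from $p=\phi_{X_1,a}(x)$ to $q=\phi_{X_1,-b}(y)$ inside one sub-box, and you dispose of the interference problem by choosing ``the last entrance time'' of $\Orb^+_{X_1}(x)$ and ``the first'' of $\Orb^-_{X_1}(y)$ and then invoking ``Hayashi's non-return argument / resolution''. That resolution \emph{is} the content of the theorem, and the single-push picture it is supposed to justify is false in general: the orbit segment of $x$ before it reaches $B(z,\delta/\rho)$ and the orbit segment leading to $y$ after leaving $B(z,\delta/\rho)$ may cross the tube $\Delta_{X_1,z}$ many times, every such passage is affected by a perturbation supported in the tube, and no choice of first/last entrance times makes all of them avoid the support of one push connecting $p$ to $q$. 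Hayashi's (and Wen's) proof does not connect $p$ to $q$ directly: it tiles the tube into finitely many sub-boxes, lists \emph{all} intersection points of the two orbit segments with these boxes, and by an inductive selection connects some intermediate pair of intersection points while discarding whole pieces of both orbits, so that the resulting $Y$-orbit of $x$ typically passes through neither $p$ nor $q$. Without reproducing this selection/induction, your claims that ``the redirected segment stays inside $\Delta_{X_1,z}$'' and that ``the other passages miss ${\rm supp}\,\xi$'' are unsupported. Relatedly, $T$ and $\rho$ are not parameters you may ``fix first'' and then estimate: in Wen's uniform version they are \emph{outputs} of the construction ($T$ must be long enough to accommodate the tiling, and $\rho$ is dictated by the number of sub-boxes and the elementary perturbation estimates valid uniformly over $\U$), and establishing that they, together with $\delta_0$ and $\U_1$, can be chosen before $X_1$ is known is precisely Wen's refinement — which your sketch asserts but does not derive.
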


As a classical application of the connecting lemma, we have the following.

\begin{lem}\label{lem.pertubation}
Let $X\in \X^1(M)$ and $\sigma\in \Sing(X)$ be a hyperbolic singularity of $X$ which is preperiodic. Then for any $C^1$ neighborhood $\U \subset \X^1(M)$ of $X$, there exists $Y\in \U$ such that $\sigma_{_Y}\in \Sing(Y)$ exhibiting a homoclinic connection, where $\sigma_{_Y}$ is the continuation of $\sigma$.
\end{lem}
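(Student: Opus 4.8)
The plan is to apply the connecting lemma (Theorem~\ref{thm.connecting}) twice --- once to create an orbit running from $\sigma$ into a small ball near a chosen point $z$, and once to create an orbit running from that ball back to $\sigma$ --- and then to glue these together into a single homoclinic connection. Since $\sigma$ is preperiodic, by definition for any $C^1$ neighborhood $\U$ of $X$ and any neighborhood $U$ of $\sigma$ there is a vector field $Y_0\in\U$ with a regular periodic point $p\in U$. Shrinking $U$ and using that $\sigma$ remains hyperbolic under small perturbations (taking $\U$ inside the star neighborhood so that the continuation $\sigma_{Y_0}$ is well defined), one can first arrange, by an arbitrarily small extra perturbation supported away from $\sigma$, that the periodic orbit $\Orb_{Y_0}(p)$ is transverse in the relevant sense and that its orbit passes near $\sigma$; the point is that $p$ accumulates on $\sigma$ as $U$ shrinks, so the periodic orbit through $p$ spends a long time close to $\sigma$ and hence its trajectory comes arbitrarily $C^0$-close to pieces of both $W^s(\sigma)$ and $W^u(\sigma)$.

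Next I would fix a regular, non-periodic point $z$ on the orbit of $p$ (any point on a long periodic orbit that lies outside a fixed small neighborhood of $\sigma$ will do; periodicity of $z$ is not an obstruction because we can first destroy the periodic orbit or simply choose $z$ relative to the \emph{unperturbed} flow $X$, applying the connecting lemma to $X$ itself), obtaining the constants $\rho, T, \delta_0$ and the neighborhood $\U_1\subset\U$ from Theorem~\ref{thm.connecting}. The two points to be connected are taken on $W^u(\sigma)$ and $W^s(\sigma)$ respectively: because $p$ lies in an arbitrarily small neighborhood of $\sigma$, both $W^u_{\mathrm{loc}}(\sigma)$ and $W^s_{\mathrm{loc}}(\sigma)$ have points whose forward (resp.\ backward) orbit enters $B(z,\delta/\rho)$ --- this is where one uses that the stable and unstable manifolds of a hyperbolic singularity are genuine immersed submanifolds and that the orbit of $p$, being $C^0$-close to $\sigma$ for a long time, shadows arcs of these manifolds into the ball near $z$. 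Applying Theorem~\ref{thm.connecting} produces $Y\in\U$, equal to $X$ (or to the intermediate vector field) outside the tube $\Delta_{z}$, and a $Y$-orbit from a point of $W^u(\sigma_Y)$ to a point of $W^s(\sigma_Y)$; since the perturbation is supported away from $\sigma$, the continuation $\sigma_Y=\sigma$ still has its local stable and unstable manifolds unchanged, so this orbit is a regular orbit contained in both $W^s(\sigma_Y)$ and $W^u(\sigma_Y)$. Its closure is the desired homoclinic connection $\Gamma$.

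The main obstacle is the bookkeeping needed to legitimately apply the connecting lemma: one must verify that the two ``feeder'' points lie outside the tube $\Delta_{z}$, that their one-sided orbits actually hit the smaller ball $B(z,\delta/\rho)$ for a $\delta<\delta_0$ that is chosen \emph{after} $\rho$ and $T$ are fixed, and that all the successive small perturbations (first to produce/exploit $p$, then the connecting-lemma perturbation) stay inside the originally prescribed neighborhood $\U$. The cleanest route, which I would follow, is to run the whole argument with respect to the original $X$: use preperiodicity only to know that arbitrarily small perturbations have periodic orbits near $\sigma$, hence (by a standard closing-type observation, or by the first application of the connecting lemma to a returning orbit) that $W^u(\sigma)$ and $W^s(\sigma)$ can be made to ``almost intersect'' near a fixed $z$, and then a single application of Theorem~\ref{thm.connecting} closes the gap. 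Controlling the supports so that $\sigma$ and its local invariant manifolds are untouched is routine once $z$ is chosen well away from $\sigma$, and it is exactly what guarantees that $\sigma_Y$ is the continuation of $\sigma$ and that the connection produced is genuinely homoclinic to it.
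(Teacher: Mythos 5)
Your opening plan (``apply the connecting lemma twice'') is on the right track, but the way you propose to execute it has a genuine gap, and the ``cleanest route'' you retreat to at the end does not work.

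The fatal problem with the single-application route is this: Theorem~\ref{thm.connecting} lets you connect the forward orbit of $x$ to the backward orbit of $y$ only if both orbits \emph{already} enter the small ball $B(z,\delta/\rho)$. Preperiodicity of $\sigma$ gives you vector fields $X_n$ $C^1$-close to $X$ with periodic orbits $\Orb_{X_n}(o_n)$ accumulating on $\sigma$; it does \emph{not} give you a point $z$ with the property that the forward orbit of $W^u_{\loc}(\sigma)$ and the backward orbit of $W^s_{\loc}(\sigma)$ both hit $B(z,\delta/\rho)$, for $X$ or for any nearby field. Your ``standard closing-type observation'' that $W^u(\sigma)$ and $W^s(\sigma)$ ``can be made to almost intersect near a fixed $z$'' is exactly the content that is missing: nothing in the hypotheses produces such a $z$. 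What the periodic orbit \emph{does} give you is accumulation points $a\in W^{s,X}_{\loc}(\sigma)$ and $b\in W^{u,X}_{\loc}(\sigma)$ (distinct points in general), and then you must route the connection \emph{through} the periodic orbit: first a connection near $b$ taking a point of $W^{u,X_n}_{\loc}(\sigma_{X_n})$ onto the periodic orbit, then a connection near $a$ taking the periodic orbit into $W^{s,X_n}_{\loc}(\sigma_{X_n})$. That is why two applications at two different base points are unavoidable.

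Even your initial two-application sketch is under-specified in a way that would break: you propose to apply the connecting lemma twice ``near a chosen point $z$.'' If both applications are based at the same $z$, the two perturbations are supported in essentially the same tube and the second can undo the first. The paper avoids this by choosing $\delta$ so that the tubes $\Delta_{X,a}$ and $\Delta_{X,b}$ are disjoint from each other and from the opposite local invariant manifolds, and by nesting the connecting-lemma neighborhoods $\U_{b,1}\subset\U_{a,1}\subset\U$ so that the first connecting perturbation lands in $\U_{a,1}$ (the domain where the second application is valid) and the second lands in $\U$. You dismiss this as ``routine bookkeeping,'' but it is precisely this nesting and disjointness that makes the composite perturbation legitimate; without it the argument does not close. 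Finally, you should also apply the connecting lemma to the perturbed vector field $X_n$ (which actually has the periodic orbit), not to $X$ itself; the issue of $z$ being periodic then disappears because the base points $a,b$ are regular non-periodic points of $X$, and Theorem~\ref{thm.connecting} already allows the vector field $X_1\in\U_1$ to which it is applied to differ from $X$.
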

\begin{proof}
Since $\sigma$ is preperiodic, there exists a sequence of vector fields $\{X_n \in \X^1(M)\}$ together with a sequence of periodic points $\{o_n\in \Per(X_n)\}$ such that $X_n$ $C^1$-approximate to $X$ and $o_n$ approximate to $\sigma$.
By using a typical argument and taking converging subsequence if necessary, we have that there exist $p_n, q_n \in \Orb_{X_n}(o_n)$ and two points $a\not=\sigma$ and $b\not=\sigma$ on the local stable manifold $W^{s,X}_{\loc}(\sigma)$ and local unstable manifold $W^{u,X}_{\loc}(\sigma)$ of $\sigma$ respectively such that $p_n$ approximate to $a$ and $q_n$ approximate to $b$.

For a $C^1$ neighborhood $\U \subset \X^1(M)$ of $X$ and the point $a$, by Theorem~\ref{thm.connecting}, there exist numbers $\rho_a>1$, $T_a>1$, $\delta_{a,0}>0$ and a $C^1$ neighborhood $\U_{a,1}\subset \U$ of $X$ with the property of the connecting lemma. Similarly, for the neighborhood $\U_{a,1}$ of $X$ and the point $b$, there exist numbers $\rho_b>1$, $T_b>1$, $\delta_{b,0}>0$ and a $C^1$ neighborhood $\U_{b,1}\subset \U_{a,1}$ of $X$ with the property of the connecting lemma. Choose $0<\delta<\min\{\delta_{a,0},\delta_{b,0}\}$ small enough such that the tubes $\Delta_{X,a}= \cup_{t\in [0,T_a]}B(\phi_{X,t}(a), \delta)$ and $\Delta_{X,b}= \cup_{t\in [0,T]}B(\phi_{X,t}(b), \delta)$ satisfy
\begin{equation}\label{property}
\overline{\Delta_{X,a}} \cap \overline{\Delta_{X,b}}=\emptyset,\ \ \overline{\Delta_{X,a}} \cap \overline{W^{u,X}_{\loc}(\sigma)}=\emptyset,\ \ {\rm and} \ \ \overline{\Delta_{X,b}} \cap \overline{W^{s,X}_{\loc}(\sigma)}=\emptyset.
\end{equation}

By the Invariant Manifold Theorem, there is a $C^1$ neighborhood $\U_1\subset \U_{b,1}$ of $X$ such that property (\ref{property}) holds for any $X'\in \U_1$. Decrease $\U_1$ if necessary, we may also assume that for any $X'\in \U_1$ the local stable manifold $W^{s,X'}_{\loc}(\sigma_{_{X'}})$ of $\sigma_{_{X'}}$ hit $B(a, \delta/\rho)$ and the local unstable manifold $W^{u,X'}_{\loc}(\sigma_{_{X'}})$ of $\sigma_{_{X'}}$ hit $B(b, \delta/\rho)$.

Take $n$ big enough such that $X_n \in \U_1$ and $p_n\in B(a, \delta/\rho)$, $q_n \in B(b, \delta/\rho)$.
Thus we can take two points $a' \in W^{s,X_n}_{\loc}(\sigma_{_{X_n}}) \cap B(a, \delta/\rho)$ and $b'\in W^{u,X_n}_{\loc}(\sigma_{_{X_n}}) \cap B(b, \delta/\rho)$. Choose two points $x\in \Orb_{X_n}^-(b')\setminus \Delta_{X_n,b}$ and $y\in \Orb_{X_n}^+(a')\setminus \Delta_{X_n,a}$. Note that $x\in W^{u,X_n}_{\loc}(\sigma_{_{X_n}})$ and $y\in W^{s,X_n}_{\loc}(\sigma_{_{X_n}})$.

Since the positive $X_n$-orbit of $x$
and the negative $X_n$-orbit of $p_n$ both hit $B(b, \delta/\rho)$ (at $b'$ and $q_n$), there exists $Z\in \U_{a,1}$
with $Z = X_n$ outside $\Delta_{X_n,b}$ such that $p_n\in \Orb_{Z}^+(x)$. Now we have the positive $Z$-orbit of $x$
and the negative $Z$-orbit of $y$ both hit $B(a, \delta/\rho)$ (at $p_n$ and $a'$). Thus we can use the connecting lemma again and get a homoclinic connection of $\sigma_{_Y}$ for some $Y\in \U$.
\end{proof}

Now Theorem \ref{thm.MainThm} is a consequence of Lemmas~\ref{lem.star}, \ref{lem.main}, \ref{lem.pertubation} and Theorem~\ref{thm.GW}. Indeed,

\begin{proof}[Proof of Theorem~\ref{thm.MainThm}]
On the contrary, suppose that there exists a vector field  $X\in \Int^1(\OS(M))$ satisfying the $C^1$-robustly oriented shadowing property which is not $\Omega$-stable. By Lemma~\ref{lem.star} we know that $X\in\X^*(M)$ is a star vector field. Since $X$ is not $\Omega$-stable, we have that  $\Sing(X)\cap\Per_*(X)\ne\emptyset$ according to Theorem~\ref{thm.GW}.
Suppose that $\sigma\in \Sing(X)\cap\Per_*(X)$ is a preperiodic singularity.

Then by Lemma~\ref{lem.pertubation}, there exists a vector field $Y$ arbitrarily $C^1$ close to $X$ such that $\sigma_{_Y}\in \Sing(Y)$ exhibiting a homoclinic connection, where $\sigma_{_Y}$ is the continuation of $\sigma$.
Note that we have $Y\in \Int^1(\OS(M))$ when $Y$ close enough to $X$. It contradicts with Lemma~\ref{lem.main}.
\end{proof}

The rest part of the paper is devoted to the proof of Lemma \ref{lem.main}. It follows the strategy similar to \cite[Section 2, Case (B1)]{PT}.

\section{Local linear model of homoclinic connection}\label{sec.linear}
 In this section, we will consider a local linear model of a singularity exhibiting a homoclinic connection. Let $\sigma\in\Sing(X)$ be a saddle type hyperbolic singularity of $X$ and
$$
\Gamma\subset W^s(\sigma)\cap W^u(\sigma)
$$
be a homoclinic connection of $\sigma$. Denote by $s=\dim E^s_\sigma$, $u=\dim E^u_\sigma$ the dimension of contracting and expending subspaces of $\sigma$.

In this section we assume that $X$ satisfies the following
\begin{cond}\label{cond1}
\begin{itemize}
  \item $X$ is linear in a small neighborhood $U(\sigma)$ of
$\sigma$;
  \item  $s\ge 2$ and there exists a  dominated splitting $E^s_\sigma=E^{ss}_\sigma\oplus E^c_\sigma$, where $\dim E^c_\sigma=1$;
  \item $\{E^{ss}_\sigma, E^c_\sigma, E^u_\sigma\}$ are
mutually orthogonal;
  \item $\Gamma\cap W^{ss}(\sigma)=\{\sigma\}$, where $W^{ss}(\sigma)$ is the strong stable manifold which is tangent to $E^{ss}_\sigma$.
\end{itemize}
\end{cond}

We introduce a orthogonal coordinate system $(x^{ss},x^c,x^u)$ with respect to the splitting
$$
T_\sigma M=E^{ss}_\sigma\oplus E^c_\sigma\oplus E^u_\sigma
$$
 in the neighborhood $U(\sigma)$, where  $x^{ss},x^c,x^u$ are the coordinates of $x\in U(\sigma)$ in $E^{ss}_\sigma, E^c_\sigma, E^u_\sigma$ respectively.

We take two points $O_s=(O_s^{ss},O_s^c,0), O_u=(0,0,O_u^u) \in\Gamma\cap
U(\sigma)\setminus\{\sigma\}$ contained in $E^s_\sigma$ and $E^u_\sigma$
respectively. Note that $O_s^c\ne 0$ since $\Gamma\cap W^{ss}(\sigma)=\{\sigma\}$.
In the neighborhood $U(\sigma)$, we choose a small codimension $1$ cross section $\Sigma_{s}$ containing $O_s$ which is orthogonal to $E^c_\sigma$ and another small codimension $1$ cross section $\Sigma_{u}$ containing $O_u$ which is parallel to $E^s_\sigma$.

 Denote by
$Q:\Sigma_u\to\Sigma_s$ the Poincar\'e map from
$\Sigma_u$ to $\Sigma_s$, and $\tau_{_Q}(x)$ the
minimal positive $t$ such that
$\phi_{t}(x)=Q(x)$. Reduce $\Sigma_u$ if necessary, we may assume that
$$
Q(\overline{\Sigma_u})\subset \Sigma_s.
$$
Thus $\tau_{_Q}(x)$ is bounded for $x\in\Sigma_u$.

Let
$$
L_s=\{x\in\Sigma_s: x^{u}=0\}
$$
be a $(s-1)$-dimensional disc in $\Sigma_s$ and
$$
L_u=\{x\in\Sigma_u: x^{ss}=0\}
$$
be a $u$-dimensional disc in $\Sigma_u$.

We point that $L_s=W^s_{\loc}(\sigma)\cap \Sigma_s$, where $W^s_{\loc}(\sigma)$ is the local stable manifold of $\sigma$. Since
$$
\dim \Sigma_s=\dim M-1=s+u-1.
$$
In what follows we additionally assume
\begin{cond}\label{cond2}
$Q(L_u)$ is transverse to $L_s$ at $O_s$ in $\Sigma_s$.
\end{cond}
For any $\eta\ge 0$, we define a cone in $\Sigma_u$:
$$
\C^u_\eta=\{x\in\Sigma_u: |x^{ss}|\le\eta|x^{c}|\}.
$$

We have the following lemma. (See Figure~\ref{fig.cone}.)


\begin{figure}[ht]
\begin{center}
\begin{minipage}[b]{0.49\linewidth}
\centering
\includegraphics[width=\textwidth]{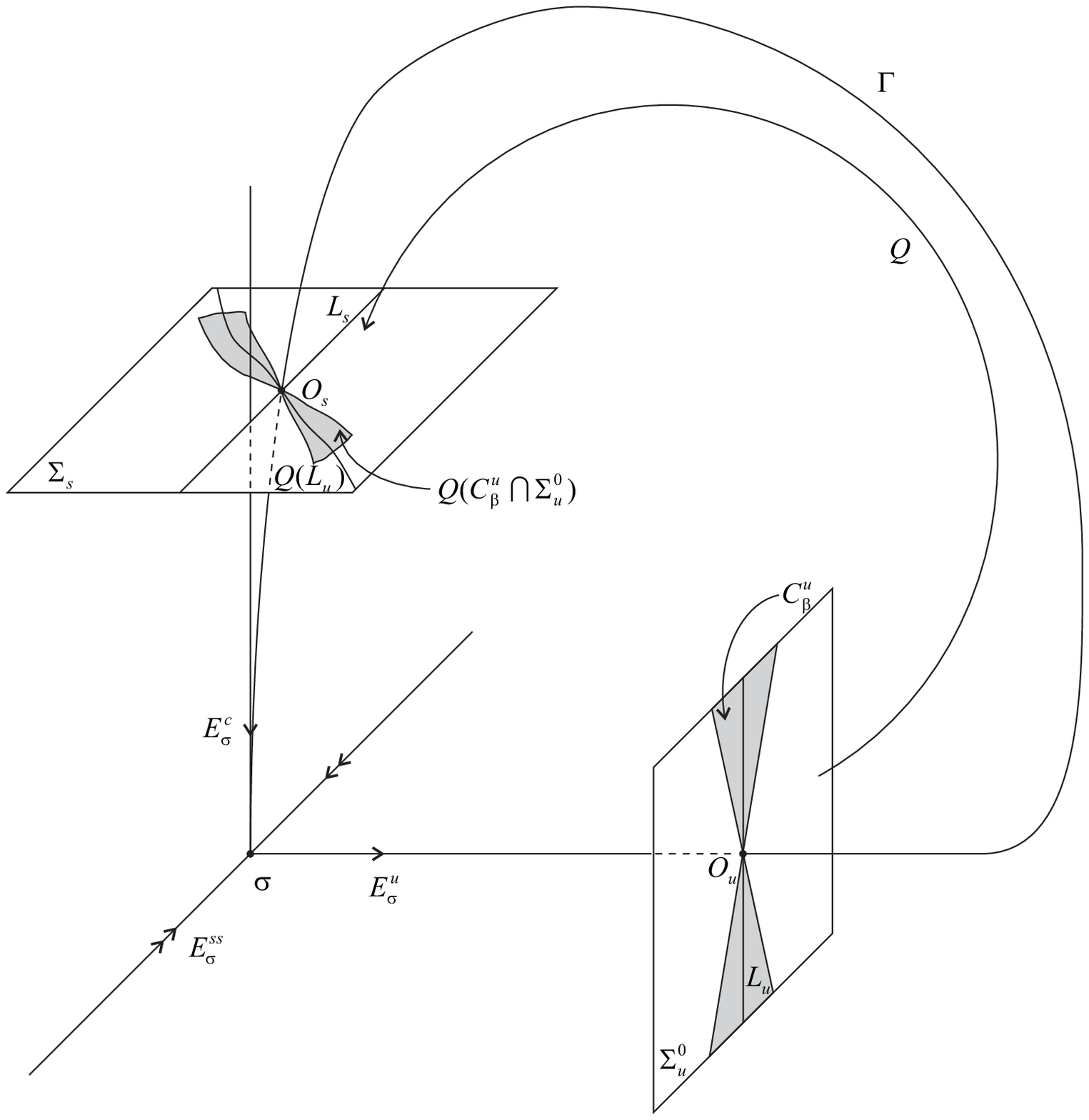}
\caption{}\label{fig.cone}
\end{minipage}
\begin{minipage}[b]{0.41\linewidth}
\centering
\includegraphics[width=\textwidth]{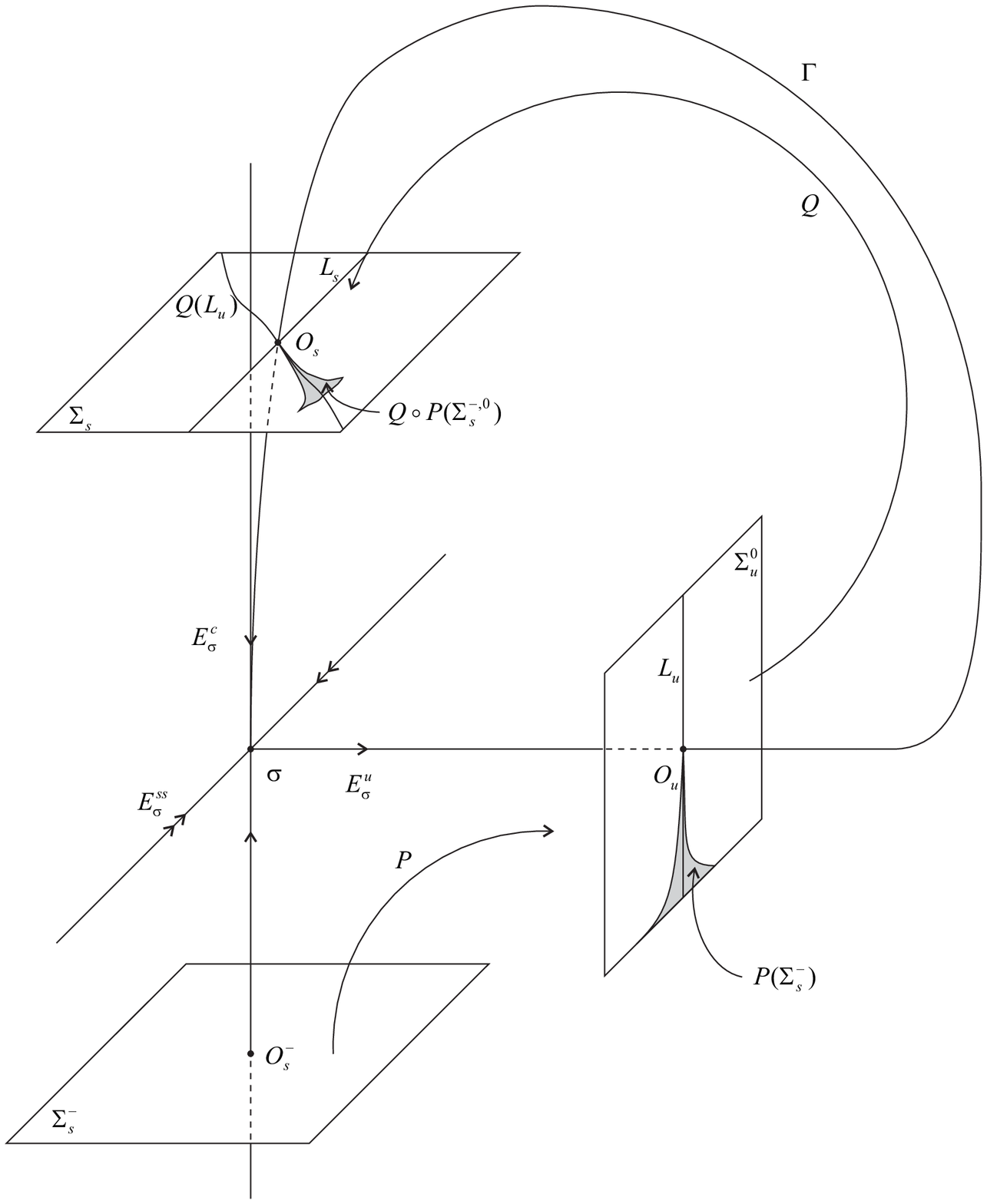}
\caption{}\label{fig.contra}
\end{minipage}
\end{center}
\end{figure}

\begin{lem}\label{lem.cone}
There exists a positive constant $\beta>0$ and a neighborhood $\Sigma_u^0\subset\Sigma_u$ of $O_u$ in $\Sigma_u$ such that
$$
Q(\C^u_\beta\cap\Sigma_u^0)\cap L_s=\{O_s\}.
$$
\end{lem}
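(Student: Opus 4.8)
The plan is to show that the transversality Condition \ref{cond2} forces the preimage under $Q^{-1}$ of the stable disc $L_s$ to be, near $O_u$, a graph over the $x^{ss}$-directions that is tangent to $L_u = \{x^{ss}=0\}$ at $O_u$; once this is known, a cone $\C^u_\beta$ with $\beta$ small enough will only meet $Q^{-1}(L_s)$ at $O_u$ itself, which is exactly the claimed conclusion.

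First I would set up the relevant dimensions. We have $\dim \Sigma_u = s+u-1$ and $\dim \Sigma_s = s+u-1$, with $\dim L_u = u$ and $\dim L_s = s-1$, so $\dim L_u + \dim L_s = \dim\Sigma_s$; thus Condition \ref{cond2} says precisely that $Q(L_u)$ and $L_s$ meet transversally at the single point $O_s$ with $T_{O_s}Q(L_u)\oplus T_{O_s}L_s = T_{O_s}\Sigma_s$. Since $Q$ is a diffeomorphism onto its image, this is equivalent to the statement that the $u$-dimensional submanifold $N := Q^{-1}(L_s)\cap\Sigma_u$ passes through $O_u$ with $T_{O_u}N \oplus T_{O_u}L_u = T_{O_u}\Sigma_u$. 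In the coordinates $(x^{ss},x^c,x^u)$ on $\Sigma_u$, the tangent space $T_{O_u}L_u$ is $\{x^{ss}=0\}$, so $T_{O_u}N$ is a $u$-dimensional complement to $\{x^{ss}=0\}$; equivalently, the projection $\pi^{ss}:\Sigma_u\to E^{ss}_\sigma$ restricted to $N$ is a local diffeomorphism near $O_u$, and hence, after shrinking, $N$ is the graph of a $C^1$ map $\psi$ sending the $x^{ss}$-coordinate to the $(x^c,x^u)$-coordinates, with $\psi(0) = (O_u^c, O_u^u)$ — wait, more carefully: $N$ is the graph of a $C^1$ map expressing $(x^c,x^u)$ in terms of $x^{ss}$ near $O_u$. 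Here I should be slightly careful that $L_s$ is cut out inside $\Sigma_s$ by $x^u=0$, which is $s-1$ equations, and pulling back by the diffeomorphism $Q$ gives $s-1$ equations cutting out $N$ in $\Sigma_u$; the transversality hypothesis is exactly what makes $(x^c,x^u)\mapsto$(these $s-1$ functions) have surjective differential, so the implicit function theorem applies and $N$ is a graph over $x^{ss}$ as claimed.

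Second, I would extract the quantitative estimate. Write points of $N$ near $O_u$ as $x = (x^{ss}, g(x^{ss}))$ where $g$ is $C^1$ and $g(0)$ has nonzero $x^c$-component: indeed the $x^c$-component of $O_u$ is $0$ since $O_u\in E^u_\sigma$... so that is not the right thing. Let me reconsider: $O_u = (0,0,O_u^u)$, so on $N$ both $x^{ss}$ and $x^c$ vanish at $O_u$, and what we need is that along $N$ the ratio $|x^{ss}|/|x^c|$ stays \emph{large} as we approach $O_u$ (so that $N$ escapes the cone). Concretely, $T_{O_u}N$ is complementary to $\{x^{ss}=0\}$, hence contains no vector with $x^{ss}=0$ except $0$; so along $N\setminus\{O_u\}$, after shrinking to a neighborhood $\Sigma_u^0$, we have $|x^c| \le C|x^{ss}|$ for some constant $C$, because the $x^c$-component is a $C^1$ function of $x^{ss}$ vanishing to first order. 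But we want the opposite inequality for points in the cone. The point is: a point $x\in\C^u_\beta\cap\Sigma_u^0$ has $|x^{ss}|\le\beta|x^c|$, while a point $x\in N\cap\Sigma_u^0$, $x\neq O_u$, has $|x^c|\le C|x^{ss}|$, i.e. $|x^{ss}|\ge C^{-1}|x^c|$; if such a point lay in both sets and were not $O_u$ we would get $C^{-1}|x^c| \le |x^{ss}| \le \beta|x^c|$, forcing $|x^c|=0$ once $\beta < C^{-1}$, whence $x^{ss}=0$ as well, hence $x^u$ is determined and $x=O_u$ (using that $N$ is a graph over $x^{ss}$, so the fiber over $x^{ss}=0$ is a single point, namely $O_u$). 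Choosing $\beta := 1/(2C) > 0$ and $\Sigma_u^0$ the graph-neighborhood gives $Q(\C^u_\beta\cap\Sigma_u^0)\cap L_s \subset \{Q(O_u)\} = \{O_s\}$; the reverse inclusion is clear since $O_u\in\C^u_\beta$ (as $O_u^{ss}=O_u^c=0$, so $0\le \beta\cdot 0$) and $Q(O_u)=O_s\in L_s$.

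The main obstacle is the bookkeeping in the second step: correctly identifying which coordinate the transversality makes $N$ a graph over, and then getting the inequality pointing the right way ($|x^c|\lesssim|x^{ss}|$ on $N$ versus $|x^{ss}|\le\beta|x^c|$ in the cone) rather than its useless reverse. Everything else — the dimension count, the use of the implicit function theorem via $Q$ being a diffeomorphism, and the continuity of the continuation of the local stable manifold — is routine, and the fact that $X$ is linear near $\sigma$ is not even needed for this particular lemma (it enters later when one analyses the passage map along the flow near $\sigma$).
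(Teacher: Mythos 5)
Your argument is correct and is essentially a detailed version of the paper's one-line proof, which simply invokes ``continuity'' after noting that $L_u=\C^u_0$, that $Q(L_u)$ is transverse to $L_s$ at $O_s$, and that $Q$ is a well-behaved $C^1$ diffeomorphism thanks to the boundedness of $\tau_{_Q}$; your implicit-function-theorem reformulation (pull $L_s$ back to $N=Q^{-1}(L_s)$, realize $N$ as a graph over $x^{ss}$, derive $|x^c|\le C|x^{ss}|$ on $N$ and take $\beta<1/C$) is exactly the content of that continuity claim. One small bookkeeping slip: $L_s$ is cut out of $\Sigma_s$ by $x^u=0$, which is $u$ equations (its codimension in $\Sigma_s$), not $s-1$ equations (its dimension); with that correction the count of equations versus the $u$ variables $(x^c,x^u)$ matches and the implicit-function-theorem step goes through as you describe.
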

\begin{proof}
Note that $L_u=\C^u_0$ and $Q(L_u)$ is transverse to $L_s$ at $O_s$ in $\Sigma_s$. Since $\tau_{_Q}(x)$ is bounded for $x\in\Sigma_u$, we can get $\beta>0$ and $\Sigma_u^0\subset\Sigma_u$ such that $Q(\C^u_\beta\cap\Sigma_u^0)\cap L_s=\{O_s\}$ by the continuity.
\end{proof}

Denote $O_s^-=(0,-O_s^c,0)$, and let $\Sigma_{s}^-$ be a small codimension $1$ cross section in $U(\sigma)$ which is orthogonal to $E^c_\sigma$ at $O_s^-$.   Denote by
$P:\Dom(P)\subset\Sigma_s^-\to\Sigma_u^0$ the Poincar\'e map from $\Sigma_s^-$ to
$\Sigma_u^0$ in $U(\sigma)$, where
$$
\Dom(P)=\{x\in\Sigma_s^-: \exists\ T>0 \ {\rm such\ that\ }\phi_T(x)\in \Sigma^0_u \ {\rm and \ } \phi_{[0,T]}(x)\subset U(\sigma)\}
$$
the domain of $P$.  Denote by $\tau_{_P}(x)$ the
minimal positive $t$ such that $\phi_{t}(x)=P(x)$.

Since the contraction of the strong stable subspace $E_\sigma^{ss}$ is stronger than that of the $1$-dimensional subspace $E_\sigma^c$. For any point $x\in \Dom(P)$, $P(x)$ tends to the direction $E_\sigma^c$ (that is, $|x^{ss}|/|x^c|$ decreases by $P$). The time $\tau_{_P}(x)$ form $\Sigma_s^-$ to $\Sigma_u^0$ is arbitrarily large when the point $x$ is arbitrarily close to $W^s_\loc(\sigma)\cap \Sigma_s^-$. It implies that $|(P(x))^{ss}|/|(P(x))^c|$ tends to $0$ as $x$ tends to $W^s_\loc(\sigma)\cap \Sigma_s^-$. Thus the image of the Poincar\'e map $P$ has a ``sharp'' at the point $O_u$.
(See Figure~\ref{fig.contra}.)
An example in dimension $3$ is the geometric Lorenz attractor. Indeed, we have the following lemma.


\begin{lem}\label{lem.conecontr}
There exists a neighborhood $\Sigma_s^{-,0}\subset\Sigma_s^-$ of $O_s^-$ in $\Sigma_s^-$ such that, for any $x\in \Dom(P)\cap\Sigma_s^{-,0}$ we have
$$
P(x)\in \C^u_\beta\cap\Sigma_u^0.
$$
\end{lem}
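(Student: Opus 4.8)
The plan is to push everything through the linear model: on $U(\sigma)$ the Poincar\'e map $P$ is just a restriction of the linear flow, and the dominated splitting $E^{ss}_\sigma\oplus E^c_\sigma$ will force the ratio $|x^{ss}|/|x^c|$ to be non-increasing along forward orbits, \emph{uniformly} in the elapsed time. First I would fix coordinates. Since $E^s_\sigma=E^{ss}_\sigma\oplus E^c_\sigma$ is dominated with $\dim E^c_\sigma=1$, the splitting $T_\sigma M=E^{ss}_\sigma\oplus E^c_\sigma\oplus E^u_\sigma$ is ${\rm D}X(\sigma)$-invariant; write $A^{ss}={\rm D}X(\sigma)|_{E^{ss}_\sigma}$, $A^u={\rm D}X(\sigma)|_{E^u_\sigma}$, and let $\mu={\rm Re}(\lambda_1)<0$ be the (real) eigenvalue on $E^c_\sigma$. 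By Condition~\ref{cond1} the flow on $U(\sigma)$ is $\phi_t(x)=(e^{tA^{ss}}x^{ss},\,e^{t\mu}x^c,\,e^{tA^u}x^u)$. Because the coordinates are orthogonal with respect to the splitting and $\Sigma_s^-$ is orthogonal to $E^c_\sigma$ at $O_s^-=(0,-O_s^c,0)$, I may take $\Sigma_s^-\subset\{x^c=-O_s^c\}$, so that $|x^c|=|O_s^c|=:c_0>0$ for every $x\in\Sigma_s^-$ (recall $O_s^c\neq 0$).

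Next I would record the quantitative consequence of domination. At the fixed point $\sigma$ the dominated splitting $E^{ss}_\sigma\oplus E^c_\sigma$ means $\|e^{TA^{ss}}\|\le\frac12\,e^{T\mu}$ for some $T>0$; iterating this and absorbing the flow on $[0,T]$ into a constant yields $C\ge 1$ with
\[
\|e^{tA^{ss}}\|\le C\,e^{t\mu}\qquad\text{for all }t\ge 0.
\]
(This is exactly the spectral content of domination: every eigenvalue of $A^{ss}$ has real part strictly below $\mu$.)

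Then the estimate is immediate. For $x\in\Dom(P)\cap\Sigma_s^-$ put $\tau=\tau_{_P}(x)>0$, so $P(x)=\phi_\tau(x)$; the orbit segment lies in $U(\sigma)$ by the definition of $\Dom(P)$, hence the linear formula applies and $(P(x))^{ss}=e^{\tau A^{ss}}x^{ss}$, $(P(x))^c=e^{\tau\mu}x^c=e^{\tau\mu}(-O_s^c)\neq 0$. Therefore
\[
\frac{|(P(x))^{ss}|}{|(P(x))^c|}\le\frac{\|e^{\tau A^{ss}}\|\,|x^{ss}|}{e^{\tau\mu}\,c_0}\le\frac{C\,e^{\tau\mu}\,|x^{ss}|}{e^{\tau\mu}\,c_0}=\frac{C}{c_0}\,|x^{ss}|,
\]
and the factor $e^{\tau\mu}$ cancels no matter how large $\tau$ is. Since $(O_s^-)^{ss}=0$, I would then choose the neighborhood $\Sigma_s^{-,0}\subset\Sigma_s^-$ of $O_s^-$ so small that $|x^{ss}|\le\beta c_0/C$ on $\Sigma_s^{-,0}$, with $\beta$ the constant from Lemma~\ref{lem.cone}. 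For $x\in\Dom(P)\cap\Sigma_s^{-,0}$ this gives $|(P(x))^{ss}|\le\beta\,|(P(x))^c|$, i.e.\ $P(x)\in\C^u_\beta$; and $P(x)\in\Sigma_u^0$ by the very definition of $P$, so $P(x)\in\C^u_\beta\cap\Sigma_u^0$, as claimed.

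The only point that needs care is the uniform-in-$t$ bound $\|e^{tA^{ss}}\|\le C\,e^{t\mu}$ for \emph{all} $t\ge 0$, not merely for large $t$: this is precisely the dominated-splitting hypothesis in Condition~\ref{cond1} transported into the linear chart, and it is exactly what prevents the arbitrarily long transition times $\tau_{_P}(x)$ near the saddle from spoiling the cone estimate. Everything else is a routine computation in the linear model.
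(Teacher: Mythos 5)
Your proof is correct, and it takes a route that is genuinely a bit leaner than the paper's. The paper's argument has two stages: first it shows $\tau_{_P}(x)\to+\infty$ as $|x^u|\to 0$ (using the lower bound $a$ on $|(P(x))^u|$ and the maximal expansion rate $\gamma$ on $E^u_\sigma$), and only then it invokes the domination of $E^{ss}_\sigma\oplus E^c_\sigma$ to conclude that the ratio contraction factor $\|e^{\tau A^{ss}}\|/e^{\tau\mu}$ tends to $0$ as $\tau\to\infty$, so that shrinking $|x^u|$ forces $P(x)$ into the cone. You bypass the ``transition time goes to infinity'' step entirely: the domination gives the uniform-in-time bound $\|e^{tA^{ss}}\|\le C\,e^{t\mu}$, and since $|x^c|\equiv c_0$ on $\Sigma_s^-$ this yields $|(P(x))^{ss}|/|(P(x))^{c}|\le (C/c_0)|x^{ss}|$ independently of $\tau_{_P}(x)$, after which shrinking the $E^{ss}$-coordinate near $O_s^-$ (which vanishes there) finishes the argument. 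Both proofs exploit exactly the same geometric fact --- the contraction on $E^{ss}_\sigma$ dominates the contraction on $E^c_\sigma$ --- and both shrink $\Sigma_s^{-,0}$ to a small neighborhood of $O_s^-$; the difference is which coordinate ($|x^{ss}|$ versus $|x^u|$) is made small and whether one needs an intermediate ``$\tau\to\infty$'' lemma. Your version is slightly cleaner and avoids an asymptotic limit where a uniform constant suffices, while the paper's version matches the geometric picture of the cusp-shaped image in Figure~\ref{fig.contra} more directly. One small imprecision worth noting: the remark that the ratio $|x^{ss}|/|x^c|$ is ``non-increasing'' along forward orbits is not literally true in general (only $\le C\times$ initial ratio, uniformly), but the estimate you actually use is the correct one and the constant $C$ is all that the argument needs.
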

\begin{proof}
Denote by
$$
a = \inf\{|x^u|: x\in\Sigma^0_u\}>0
$$
the infimum of the $E^u_\sigma$-coordinate of the points in $\Sigma^0_u$ and
$$
\gamma = \max\{{\rm Re}(\alpha):  \alpha\ {\rm is\ a\ eigenvalue\ of\ D}X(\sigma)\}>0
$$
the maximum expansion in $E^u_\sigma$.

Then for any $x\in \Dom(P)$ (note that $x^u\ne 0$) we have
$$
|x^u|e^{\gamma\tau_{_P}(x)}\ge  |(P(x))^u|\ge a.
$$

It implies that
$$
\tau_{_P}(x)\to +\infty\ \ {\rm as}\ \ |x^{u}|\to 0.
$$
On the other hand, since $E^{ss}_\sigma$ is the strong contracting subspace, we have
$$
 \frac{|(P(x))^{ss}|}{|(P(x))^{c}|}  \left / \frac{|x^{ss}|}{|O_s^{c}|} \right. =\frac{|(P(x))^{ss}|}{|x^{ss}|}  \left / \frac{|(P(x))^{c}|}{|O_s^{c}|} \right. \to 0 \ \ {\rm as} \ \tau_{_P}(x)\to +\infty.
$$

Hence $P(x)\in \C^u_\beta$ as $|x^{u}|$ is small enough. Thus there exists a neighborhood $\Sigma_s^{-,0}\subset\Sigma_s^-$ of $O_s^-$ in $\Sigma_s^-$ such that
$$P(x)\in \C^u_\beta\cap\Sigma_u^0$$
for any $x\in \Sigma_s^{-,0}\cap \Dom(P)$.
\end{proof}

Combined Lemma~\ref{lem.cone} and Lemma~\ref{lem.conecontr},  we can get the following lemma immediately.

\begin{lem}\label{lem.notinstable}
For any point $x\in \Sigma_s^{-,0}\cap \Dom(P)$, we have
$$Q\circ P(x)\not\in L^s.$$
\end{lem}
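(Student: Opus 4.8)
The plan is to read Lemma~\ref{lem.notinstable} off from Lemma~\ref{lem.conecontr} and Lemma~\ref{lem.cone}, the only genuine subtlety being that Lemma~\ref{lem.cone} does not give disjointness of $Q(\C^u_\beta\cap\Sigma^0_u)$ from $L_s$ but only that the intersection is the single point $O_s$, so that point has to be excluded separately. First I would fix $x\in\Sigma_s^{-,0}\cap\Dom(P)$ and apply Lemma~\ref{lem.conecontr} to get $P(x)\in\C^u_\beta\cap\Sigma^0_u$. Arguing by contradiction, assume $Q\circ P(x)\in L_s$; since $P(x)$ lies in the cone region $\C^u_\beta\cap\Sigma^0_u$, Lemma~\ref{lem.cone} forces $Q\circ P(x)=O_s$, and the entire problem is reduced to showing that this is impossible.

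To rule out $Q\circ P(x)=O_s$, I would use that, by the choice of the cross sections $\Sigma_u\ni O_u$ and $\Sigma_s\ni O_s$ along the homoclinic connection $\Gamma$, the return map satisfies $Q(O_u)=O_s$, and that $Q$, being a Poincar\'e map between small transverse sections, is injective; hence $O_u$ is the only preimage of $O_s$ in $\Sigma_u$, and $Q\circ P(x)=O_s$ would force $P(x)=O_u$. But $P(x)=O_u$ cannot happen: by the definition of $P$ and of $\Dom(P)$, the orbit arc from $x\in\Sigma_s^-$ to $P(x)=O_u$ stays inside $U(\sigma)$, where $X$ is linear and $E^u_\sigma$ is an invariant subspace that contains the endpoint $O_u$; hence the whole arc lies in $E^u_\sigma$, and in particular $x\in E^u_\sigma$, so the $E^c_\sigma$-coordinate of $x$ is zero. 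This contradicts the fact that every point of $\Sigma_s^-$ has $E^c_\sigma$-coordinate equal to that of $O_s^-=(0,-O_s^c,0)$, namely $-O_s^c$, which is nonzero precisely because $\Gamma\cap W^{ss}(\sigma)=\{\sigma\}$ in Condition~\ref{cond1}. Therefore $Q\circ P(x)\notin L_s$.

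At this stage there is essentially no hard step: all the dynamical work has already been done in Lemmas~\ref{lem.cone} and \ref{lem.conecontr}, which encode, respectively, the transversality hypothesis on $Q(L_u)$ and $L_s$ (Condition~\ref{cond2}) and the sharpening of the image of $P$ towards the direction $E^c_\sigma$ produced by the dominated splitting $E^s_\sigma=E^{ss}_\sigma\oplus E^c_\sigma$. The one point I would be careful about is exactly the piece of bookkeeping above: Lemma~\ref{lem.cone} leaves the single exceptional point $O_s$ in play, so it must be excluded by hand, and it is in that exclusion that the non-degeneracy $O_s^c\ne0$ --- equivalently, the fact that $\Gamma$ is not tangent to the strong stable direction --- enters the argument one last time.
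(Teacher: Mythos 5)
Your proof is correct and matches what the paper intends with its one-line remark that the lemma follows ``immediately'' from Lemma~\ref{lem.cone} and Lemma~\ref{lem.conecontr}. You also correctly identify and handle the one point the paper glosses over --- that Lemma~\ref{lem.cone} leaves the single exceptional value $O_s$ (with unique $Q$-preimage $O_u$) in play --- by noting that $P(x)=O_u$ would, via invariance of $E^u_\sigma$ under the linear flow in $U(\sigma)$, force $x^c=0$, contradicting $x^c=-O_s^c\ne 0$ on $\Sigma_s^{-}$.
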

\qed

\section{The proof of Lemma~\ref{lem.main}}\label{secLProof}

Suppose on the contrary that there exists a star vector field $X\in \Int^1(\OS(M))$ which  has a singularity $\sigma\in\Sing(X)$ exhibiting a homoclinic connection.

Up to an arbitrarily $C^1$ small perturbation, we may assume that $X$ is linear in a small neighborhood $U_r(\sigma)$ of
$\sigma$ on a proper chart, still exhibits a homoclinic connection $\Gamma\subset W^s(\sigma)\cap W^u(\sigma)$ (see \cite{PT} for more details on the perturbations). Without loss of generality, we can assume that ${\rm SV}(\sigma)\ge 0$.

We consider the following two cases:

\smallskip

\nt Case 1. $\dim E^s_\sigma=1$.

\smallskip

In this case, take $\varepsilon=r/10$, then there exists $d>0$ such that every $d$-pseudo orbit can be $\varepsilon$-oriented shadowed by a real orbit of $X$.

Take $p\in W^s(\sigma)\setminus \Gamma$ and $q\in W^u_{\loc}(\sigma)\cap \Gamma$ in a small neighborhood of $\sigma$ such that the map
$$
g(t)=\left \{
\begin{array}{ll}
           \phi_t(p), & t\le 0 ; \\
           \phi_t(q), & t>0.
        \end{array}
 \right.
$$
is a $d$-pseudo orbit. Thus $g$ is $\varepsilon$-oriented shadowed by a real orbit $\Orb(x)$. Note that $q\in W^s(\sigma)$ implies that $x\in W^s(\sigma)$. But since $\dim E^s_\sigma=1$ we have $x\in \Gamma$. It is a contradiction.

\smallskip

\nt Case 2. $\dim E^s_\sigma\ge 2$.

\smallskip

In this case, by Lemma~\ref{lem.weakspace} we know that there exists a dominated splitting
$$
E^s_\sigma=E^{ss}_\sigma\oplus E^c_\sigma,
$$
where $\dim E^c_\sigma=1$.
Changing the Riemannian metric if necessary,  we assume that $\{E^{ss}_\sigma, E^c_\sigma, E^u_\sigma\}$ are
mutually orthogonal.

By an arbitrarily small perturbation we can find a vector field $X^* \in \Int^1(\OS)$, satisfying Conditions \ref{cond1}, \ref{cond2}. For simplifuing the notation we denote it by $X$ again.

Below we are using the notation from section \ref{sec.linear}.


Let $\beta>0$ and $\Sigma_s^{-,0}\subset \Sigma_s^-$ be given by Lemma~\ref{lem.cone} and Lemma~\ref{lem.conecontr}. Then for any point $x\in \Sigma_s^{-,0}\cap \Dom(P)$, we have
$$
Q\circ P(x)\not\in L^s.
$$

Take $0<\varepsilon<\min\{|O_s^c|/10,|O_u^u|/10\}$ small enough such that, for any point $x$ in $U_\varepsilon(O_s)$ ($U_\varepsilon(O_s^-)$, $U_\varepsilon(O_u)$), the component of $\Orb(x)$ in $U_\varepsilon(O_s)$ ($U_\varepsilon(O_s^-)$, $U_\varepsilon(O_u)$) containing $x$ intersects $\Sigma_s$ ($\Sigma_s^{-,0}$, $\Sigma_u^0$).

Let $d>0$ be the parameter given by the oriented shadowing property according to $\varepsilon$. Since $O_s^-\in W^s_{\loc}(\sigma)$ and $O_u\in W^u_{\loc}(\sigma)$, there are $p\in \Orb^+(O_s^-)$ and $q\in \Orb^-(O_u)$ in a small neighborhood of $\sigma$ such that the map
$$
g(t)=\left \{
\begin{array}{ll}
           \phi_t(p), & t\le 0 ; \\
           \phi_t(q), & t>0.
        \end{array}
 \right.
$$
is a $d$-pseudo orbit. Then there exists a point $x$  and an increasing homeomorphism
 $h(t)$ of the real line such that
$$
\dist(g(t),\phi_{h(t)}(x))\le \varepsilon
$$
for all $t$.

Denote by $t_1$, $t_2$ and $t_3$ the  parameters satisfying $g(t_1)=O_s^-$, $g(t_2)=O_u$ and $g(t_3)=O_s$.
According to the choice of $\varepsilon$, the component of $\Orb(\phi_{h(t_1)}(x))$ ($\Orb(\phi_{h(t_2)}(x))$, $\Orb(\phi_{h(t_3)}(x))$) in $U_\varepsilon(O_s^-)$ ($U_\varepsilon(O_u)$, $U_\varepsilon(O_s)$) would intersect $\Sigma_s^{-,0}$ ($\Sigma_u^{0}$, $\Sigma_s$). Denote by $\phi_{h(\widetilde{t_1})}(x)$, $\phi_{h(\widetilde{t_2})}(x)$ and $\phi_{h(\widetilde{t_3})}(x)$ the intersection points respectively.

Since $\Orb(x)$ $\varepsilon$-oriented shadows $g$, we have that
$$
\phi_{h(\widetilde{t_2})}(x)=P(\phi_{h(\widetilde{t_1})}(x)) \ \ {\rm and}\ \ \phi_{h(\widetilde{t_3})}(x)=Q(\phi_{h(\widetilde{t_2})}(x)).
$$

Thus by Lemma~\ref{lem.notinstable}, we can get
$$
\phi_{h(\widetilde{t_3})}(x)\not\in L^s.
$$
It implies that $\phi_{h(\widetilde{t_3})}(x)\not\in W^s_{\loc}(\sigma)$.

But on the other hand, since $O_s=g(t_3)\in W^s_{\loc}(\sigma)$ we have
$$
\Orb^+(\phi_{h(t_3)}(x))\subset U_r(\sigma).
$$
Thus
$$
\Orb^+(\phi_{h(\widetilde{t_3})}(x))\subset U_r(\sigma).
$$
It implies $\phi_{h(\widetilde{t_3})}(x)\in W^s_{\loc}(\sigma)$.

This contradiction proves this theorem.
\qed

\section*{Acknowledgment}

Shaobo Gan is supported by 973 project 2011CB808002, NSFC 11025101 and 11231001.
Ming Li is partially supported by the National Science Foundation of China (Grant No. 11201244),
the Specialized Research Fund for the Doctoral Program of Higher Education of China (SRFDP, Grant No. 20120031120024),
and the LPMC of Nankai University.
Sergey Tikhomirov is partially supported by Chebyshev Laboratory (Department of Mathematics and Mechanics, St. Petersburg State University)  under RF Government grant 11.G34.31.0026, JSC ``Gazprom neft'', by the Saint-Petersburg State University research grant 6.38.223.2014 and by  the German-Russian
Interdisciplinary Science Center (G-RISC) funded by the German Federal
Foreign Office via the German Academic Exchange Service (DAAD).

\bibliographystyle{amsplain}

\end{document}